\def\BibTeX{{\rm B\kern-.05em{\sc i\kern-.025em b}\kern-.08em
    T\kern-.1667em\lower.7ex\hbox{E}\kern-.125emX}}
\newcommand{\removelatexerror}{\let\@latex@error\@gobble}
\theoremstyle{plain}
\newtheorem{theorem}{Theorem}
\newtheorem{definition}{Definition}
\newtheorem{assumption}{Assumption}
\newtheorem{remark}[theorem]{Remark}
\newtheorem{lemma}{Lemma}
\newtheorem{property}[theorem]{Property}
\newcommand{\mR}{{\mathbb R}}
\newcommand{\mE}{{\mathbb E}}
\newcommand{\mP}{{\mathbb P}}
\newcommand{\mU}{{\mathbb U}}
\newcommand{\bk}{{\mathbf k}}
\newcommand{\bq}{{\mathbf q}}
\newcommand{\bP}{{\mathbf P}}
\newcommand{\bM}{{\mathbf M}}
\newcommand{\bx}{{\mathbf x}}
\newcommand{\brho}{{\boldsymbol \rho}}
\newcommand{\bPsi}{{\boldsymbol \Psi}}
\newcommand{\bPhi}{{\boldsymbol \Phi}}
\newcommand{\bb}{{\mathbf b}}
\newcommand{\bX}{{\mathbf X}}
\newcommand{\bY}{{\mathbf Y}}
\newcommand{\bg}{{\mathbf g}}
\newcommand{\bA}{{\mathbf A}}
\newcommand{\bB}{{\mathbf B}}
\newcommand{\bu}{{\mathbf u}}
\newcommand{\bL}{{\mathbf L}}
\newcommand{\bW}{{\mathbf W}}
\newcommand{\bK}{{\mathbf K}}
\newcommand{\bG}{{\mathbf G}}
\newcommand{\bS}{{\mathbf S}}
\newcommand{\bv}{{\mathbf v}}
\newcommand{\bn}{{\mathbf n}}
\newcommand{\by}{{\mathbf y}}
\newtheorem*{proof}{Proof}
\definecolor{mycolor1}{RGB}{230,97,1}
\definecolor{mycolor2}{RGB}{153,204,0}
\definecolor{mycolor3}{RGB}{253,184,99}
\definecolor{mycolor4}{RGB}{153,204,255}
\begin{document}

\title{Data-Driven Stochastic Optimal Control using Linear Transfer Operators}
\author{Umesh Vaidya and Duvan Tellez-Castro
\thanks{
}
\thanks{ 
This work was supported by the Automotive Research Center (ARC), a US Army Center of Excellence for modeling and simulation of ground vehicles, under Cooperative Agreement W56HZV-19-2-0001 with the US Army DEVCOM Ground Vehicle Systems Center (GVSC).DISTRIBUTION A. Approved for public release; distribution unlimited. OPSEC\#5906.  Umesh Vaidya, is Professor at the Department of Mechanical Engineering, Clemson University, Clemson, SC 29631 USA (e-mail: uvaidya@clemson.edu). 
}}
\maketitle


\begin{abstract}
We provide a data-driven framework for optimal control of a continuous-time stochastic dynamical system. The proposed framework relies on the linear operator theory involving linear Perron-Frobenius (P-F) and Koopman operators. Our first results involving the P-F operator provide a convex formulation to the optimal control problem in the dual space of densities.
This convex formulation of the stochastic optimal control problem leads to an infinite-dimensional convex program. The finite-dimensional approximation of the convex program is obtained using a data-driven approximation of the P-F operator. Our second results demonstrate the use of the Koopman operator, which is dual to the P-F operator, for the stochastic optimal control design. We show that the Hamilton Jacobi Bellman (HJB) equation can be expressed using the Koopman operator. We provide an iterative procedure along the lines of a popular policy iteration algorithm based on the data-driven approximation of the Koopman operator for solving the HJB equation. The two formulations, namely the convex formulation involving P-F operator and Koopman based formulation using HJB equation, can be viewed as dual to each other where the duality follows due to the dual nature of P-F and Koopman operators. Finally, we present several numerical examples to demonstrate the efficacy of the developed framework. 

\end{abstract}


\section{Introduction}

The stochastic optimal control problem (SOCP) is a cornerstone of systems and control theory \cite{aastrom2012introduction,kushner1964dynamical}. This problem has received renewed attention with the growing interest in data-driven analytics and control with applications ranging from vehicle autonomy, robotics, transportation networks, power grid, security, and advanced manufacturing \cite{kormushev2013reinforcement,sallab2017deep, sethi2002optimal}. The SOCP is also at the heart of Reinforcement learning (RL), and a variety of algorithms are developed for the data-driven approximation of its solution \cite{sutton2018reinforcement}. For a system in continuous-time with continuous state space and control, the solution to SOCP essentially boils down to solving a Hamilton Jacobi Bellman (HJB) equation \cite{fleming2012deterministic}, which is a nonlinear partial differential equation. In discrete-time, SOCP involves solving the Bellman equation using the principle of dynamic programming \cite{bertsekas1996stochastic}. Thus, the Bellman equation can be viewed as the discrete-time counterpart of the continuous-time HJB equation. Given the nonlinear nature of the HJB equation, one of the popular approaches to solve the HJB equation is via iterative approach \cite{beard1997galerkin,bertsekas2011approximate}. 
In this paper, we propose an alternate approach for solving SOCP based on the convex formulation of the problem in the dual space of densities. This paper provides a data-driven solution to the SOCP over an infinite time horizon with continuous-time system dynamics.  The dual approach leads to a convex infinite-dimensional optimization problem to be solved for the SOCP. Unlike iterative algorithms for solving HJB equation in the primal domain, the convex formulation in the dual space lends itself to a single-shot approach for solving SOCP. We use a linear operator theoretic framework involving P-F and Koopman operators \cite{Lasota} to provide a novel perspective to the SOCP problem and the computation of its solution using data. We show that the traditional primal formulation of SOCP involving the HJB equation is closely tied to the Koopman operator. Furthermore, the dual convex formulation of the SOCP can be understood naturally through the lenses of 
duality between the Koopman and P-F operator.
\\

\noindent {\it Literature review}: Given the significance of SOCP in various applications, there is extensive literature on this topic. We refer the interested readers to the survey articles and classical work on this topic \cite{pham2005some,kushner2001numerical,kushner1964dynamical,bertsekas1996stochastic,bertsekas4}.  With the nonlinear and infinite-dimensional nature of the HJB equation, an analytical solution can be found in very few cases, and one has to resort to numerical methods for solving the HJB equation. In the development of numerical methods, the complexity associated with the nonlinear nature of the HJB is broken down by providing an iterative process for solving the HJB equation. The iterative approach relies on solving an infinite-dimensional linear equation for the value function with a given control input. The value function is then used to update the control input.  The infinite-dimensional linear equation for value function is solved approximately for the value function using Galerkin-type projection scheme \cite{beard1997galerkin}. The iterative approach for solving SOCP via HJB equation and also Bellman equation plays a fundamental role in the variety of RL algorithms, including policy iteration, value iteration, and actor-critic method \cite{kumar2009computational}. 
The dual approach to the SOCP is not studied extensively from the numerical perspective. 
In particular, it is known that the dual formulation to the Bellman equation leads to an infinite-dimensional linear program to the design of SOC \cite{de2003linear}. However, contrary to the dual formulation proposed in this paper, the Bellman dual infinite-dimensional linear program is constructed on the joint space of states and control. With continuous state and action space, the finite-dimensional approximation of the bi-infinite linear program is a challenging problem. On the other hand, the dual formulation to the OCP is well studied for deterministic control systems \cite{lasserre2008nonlinear}. The results in \cite{korda2017convergence} use sum-of-square (SOS)-based computational methods and moment-based relaxation techniques for the finite-dimensional approximation of infinite-dimensional convex optimization problem in the dual form. The moment-based relaxation and SOS-based optimization methods are also used for the analysis of the stochastic dynamical system, including solving optimal control problems in its dual form and computing exit time \cite{savorgnan2009discrete, henrion2021moment}. The results developed in this paper can be viewed as a natural extension of the results developed in \cite{vaidyaocpconvex} from the deterministic to the stochastic setting, where computational methods based on the finite-dimensional approximation of linear P-F and Koopman operators are developed for data-driven control. More recently there has been explosion of research activities on  data-driven control design using linear Koopman operator \cite{brunton2016koopman,peitz2019koopman,villanueva2021towards,borggaard2009control, abraham2019active,korda2018linear, sootla2018optimal,otto2021koopman,fackeldey2020approximative,kaiser2021data}. However, most of these results only exploit the linearity of the Koopman operator and which does not lead to convex formulation to controller synthesis problem. Another popular approach for solving the HJB equation relies on path integral-based numerical scheme \cite{satoh2016iterative}. The basis idea is to perform a change of variables that use noise statistics of the underlying stochastic system to transform the nonlinear HJB equation to a linear partial differential equation (PDE). The solution to the linear PDE is then obtained using the Feynman-Kac formula and path integral.  \\

\noindent {\it Contributions}: We provide a convex formulation to the SOCP using the linear operator theoretic framework involving  P-F and Koopman operators. The linear P-F and Koopman operators are dual to each other and provide for a linear lifting of nonlinear system dynamics in the space of density and function (observables), respectively. The results are inspired by the dynamical system theory, as the duality in SOCP is discovered through duality between the P-F and Koopman operators. 
The SOCP problem is formulated in the dual space of density using P-F operator-based lifting of control system dynamics. This dual approach leads to the infinite-dimensional convex optimization-based formulation of the SOCP.  We provide a computational framework based on the data-driven approximation of the P-F operator for the data-driven stochastic optimal control design. 
The convex formulation of SOCP is made possible by exploiting the P-F operator's linearity, positivity, and Markov properties. Furthermore, we show that the hard constraints on the control input and the state can also be written convexly in the dual formulation. 
On the other hand, we establish a connection between the Koopman operator and the HJB equation. This connection allows us to develop a numerical algorithm for the data-driven solution of the HJB equation based on Koopman theory. In particular, we provide an iterative algorithm based on a data-driven approximation of the Koopman operator for solving the SOCP problem in the primal domain. This new algorithm is reminiscent of the generalized policy iteration (GPI) algorithm in RL and we call it as Koopman policy iteration (KPI). Moreover, the interpretation of GPI using the Koopman theory opens up the possibility of exploiting the rich spectral theory of the Koopman operator for data-driven control.  
It is important to emphasize that the existing iterative algorithm for solving the HJB equation, including our proposed Koopman-based approach, requires an initial control policy to be stabilizing. However, designing stabilizing controller for a stochastic nonlinear system is far from a trivial problem. Our proposed dual approach to SOCP does not suffer from this drawback. The convex optimization problem in our dual framework can be solved as a single shot problem, where almost everywhere stochastic stabilizability arises as constraints of this optimization problem. So the data-driven stochastic stabilization will emerge as the particular case of the main result on SOCP. The results in this paper are an extended version of results from \cite{vaidya2021convex}. In particular, the data-driven computation framework is new to this paper. Results involving convex formulation to SOCP with state and input constraints are also new. 
Furthermore, we provide precise characterization for the existence of optimal controller along with the proof of some of the key results which were missing from \cite{vaidya2021convex}. 
The results involving the Koopman-based formulation of the HJB equation and the associated iterative computation scheme based on the data-driven approximation of the Koopman operator are also new to this paper. \\

\noindent {\it Organization}: The paper is organized as follows. In Section \ref{section_prelim}, we provide preliminaries on linear transfer operator theory involving P-F and Koopman operators their semi-group and infinitesimal generators. In Section \ref{section_stochasticstability}, we prove new results on the stochastic stability analysis. These results play an important role in the dual formulation of the SOCP. The main results of this paper on the dual formulation of SOCP involving P-F operator and primal formulation involving Koopman operator are presented in Section  \ref{section_main}.  The computation framework for the data-driven approximation of stochastic optimal control is presented in Section \ref{section_data-driven}. Conclusions are presented in Section \ref{section_conclusion}.

\section{Preliminaries and Notations}\label{section_prelim}

In this section, we discuss some preliminaries and introduce some notations, which are used in deriving the main results on data-driven optimal control. Consider a stochastic dynamical system 
\begin{eqnarray}
\dot \bx={\bf F}(\bx)+\sigma\bn(\bx)\xi,\;\;\;\label{sys}
\end{eqnarray}
where, $\bx\in \mathbb{R}^n,  \mathbb{R}^q\ni\xi=\frac{dw}{dt}$ is the white noise process and is time derivative of Weineer process. We assume that ${\bf f}(0)=\bn(0)=0$ and hence the origin is assumed to be the equilibrium point of the system.  Let $\bX_t$ be the solution of the stochastic differential equation (\ref{sys}). For more details on the definition and condition for the existence of solution of stochastic differential equation, refer to \cite{Lasota}. In particular, the vector fields ${\bf F}(\bx)$ and $\bn(\bx)$ are assumed to satisfy following Lipschitz condition.
\begin{eqnarray}
|{\bf F}(\bx)-{\bf F}(\by)|\leq L|\bx-\by|,\;\;\;\bx\in \mathbb{R}^n,\;\;\by\in \mathbb{R}^n\nonumber\\
|{\bf n}(\bx)-{\bf n}(\by)|\leq L|\bx-\by|,\;\;\bx\in \mathbb{R}^n,\;\;\by\in \mathbb{R}^n
\end{eqnarray}
for some constant $L$. 

\noindent {\bf Notations}: Let $\bX^\bx:=\{\bX_t^\bx\}_{t\geq 0}$ be the solution process with initial condition $\bX_0=\bx$, where the solution is defined in the sense of Ito calculus \cite{Lasota}. Let $P^\bx$ be the distribution of $\bX^\bx$ and $\mE_\bx$ be the expectation with respect to $P^\bx$. We introduce following notations. Let ${\cal L}_1(\mR^n,\mR^m)$, integrable functions from $\mR^n$ to $\mR^m$, ${\cal L}_\infty(\mR^n,\mR^m)$, bounded functions from $\mR^n$ to $\mR^m$, ${\cal C}^k(\mR^n,\mR^m)$ space of functions with $k$ continuous derivative, ${\cal C}^k_c(\mR^n,\mR^m)$, space of functions in ${\cal C}^k(\mR^n,\mR^m)$ with compact support, and ${\cal C}_0$ closure of ${\cal C}^k_c(\mR^n,\mR^m)$ in ${\cal L}_\infty$ norm. If the space $\mR^m$ is not specified then it is understood that the underlying space is $\mR$.
Let $B_\delta$ be the small neighborhood of the origin for some fixed $\delta>0$  and  $\bS:=\mR^n\setminus B_\delta$. We use ${\cal B}(\bS)$ to denote the Borel $\sigma$-algebra on $\bS$ and ${\cal M}(\bS)$ is the vector space of real-valued measure on ${\cal B}(\bS)$. ${\cal S}:= {\cal L}_1(\bS)\cap {\cal C}^2(\bS,\mR_{\geq 0})$.
\subsection{Perron-Frobenius and Koopman Operator for Stochastic System} 
The results and discussion in this section are taken from different references \cite{Lasota,shi2014fokker,manca2008kolmogorov,rudnicki2002markov} and is presented here for the sake of completeness. 
The theory of linear operators involving P-F and Koopman operators generalizes from deterministic dynamical systems to stochastic systems. 
Following additional assumptions are made on the vector field ${\bf F}$ and $\bn$ to ensure that these linear operators and their solutions are well defined.
\begin{assumption}\label{assume_diff} We assume that the vector field, ${\bf F}(\bx)$, and coefficients $a_{ij}(\bx):=\sum_{k=1}^q \bn_{ik}(\bx)\bn_{kj}(\bx)$ are ${\cal C}^4$ functions of $\bx$. Furthermore, following growth conditions are satisfied by the coefficients
{\small
\begin{eqnarray*}
|a_{ij}(\bx)|\leq M (1+|\bx|^2),|\tilde f_i(\bx)|\leq M(1+|\bx|),|\tilde c(\bx)|\leq M,
\end{eqnarray*}}
$\;\;i,j=1,\ldots, n$, where $M$ is some positive constant, ${\bf F}=(F_1,\ldots, F_n)^\top$ and 
\begin{eqnarray}\tilde F_i(\bx)=-F_i(\bx)+\sigma^2\sum_{j=1}^n\frac{\partial a_{ij}(\bx)}{\partial \bx_j},\\ \tilde c(\bx)=\frac{\sigma^2}{2}\sum_{i,j=1}^n\frac{\partial^2 a_{ij}(\bx)}{\partial x_i\partial x_j}-\sum_{i=1}^n\frac{\partial F_i(\bx)}{\partial x_i}.
\end{eqnarray}
Notice that $a_{ij}=a_{ji}$ by definition. For any given $\lambda=(\lambda_1,\ldots, \lambda_n)\in \mR^n$, we assume that uniform parabolicity assumptions holds if there exists a constant $\rho>0$ such that
\[\sum_{i,j=1}^n a_{ij}\lambda_i\lambda_j\geq\rho \sum_{i=1}^n \lambda_i^2\]
\end{assumption}

\begin{definition}[Koopman operator and  generator]\label{definition_koopmangenerator}  Let $\varphi(\bx)\in {\cal L}_\infty(\mathbb{R}^n)$, then the Koopman operator $\mU_t:{\cal L}_\infty(\mathbb{R}^n)\to {\cal L}_\infty(\mathbb{R}^n)$ for stochastic dynamical system (\ref{sys}) is defined as 
\begin{eqnarray}
[\mU_t \varphi](\bx)=\mE_\bx\left[\varphi(\bX_t^\bx)\right]\label{Koopman_operator}
\end{eqnarray}
$\mU:=\{\mU_t\}_{t\geq 0}$ restricts to strongly continuous semi-group on ${\cal C}_0$ and hence it has a infinitesimal generator ( \cite{kallenberg2006foundations}, Theorem 21.11). For any $\varphi\in {\cal C}_c^2(\mR^n)$, the infinitesimal generator is given by \footnote{The superscript notation of ${\cal A}^{{\bf f}_n}_K$ is used to signify the fact the the Koopman generator corresponds to the stochastic dynamical system $\dot \bx={\bf f}(\bx)+\sigma \bn(\bx)\xi$}
\begin{eqnarray}\label{eq:Koopmangenerator}
{\cal A}^{{\bf F}_n}_K\varphi:=\lim_{t\to 0}\frac{(\mU_t -I)\varphi}{t}\nonumber\\={\bf F}(\bx)\cdot \nabla  \varphi  +\frac{\sigma^2}{2}\sum_{i,j=1}^n [\bn \bn^\top]_{i j} \frac{\partial^2 \varphi}{\partial x_i\partial x_j}.
\end{eqnarray}

\end{definition}

We assume that the distribution of $\bX_0$ is absolutely continuous and has density $\rho_0({\bx})$. Then we know that ${\bX}_t$ has a density $\rho({\bx},t)$ i.e., 
\[{\rm Prob}\{\bX_t\in B\}=\int_B \rho(\bx,t)d\bx\]which satisfies the following Fokker-Planck (F-P) equation also known as Kolomogorov forward equation
\begin{eqnarray}
\frac{\partial \rho({\bx},t)}{\partial t}=-\nabla \cdot ({\bf F}(\bx) \rho)  +\frac{\sigma^2}{2}\sum_{i,j=1}^n \frac{\partial^2[(\bn \bn^\top)_{ij}\rho]}{\partial x_i\partial x_j}.
\end{eqnarray}
Following Assumption~\ref{assume_diff}, we know the solution $\rho({\bx},t)$ to F-P equation exists and is differentiable (Theorem 11.6.1  \cite{Lasota}). Under some regularity assumptions on the coefficients of the F-P equation (Definition 11.7.6  \cite{Lasota}) it can be shown that the F-P admits a unique classical solution (Theorem 11.7.1 \cite{Lasota}) given by 
\[\rho(\bx,t)=\int \Gamma(\bx,\by,t)f(\by)d\by,\] where $f\in {\cal C}^0(\mR^n)$ and satisfies $|f(\bx)|\leq ce^{\alpha \bx^2}$. The fundamental solution or kernel function, $\Gamma(\bx,\by,t)$ is defined for all $t>0$ and $\bx,\by\in \mR^n$, it is differetiable w.r.t. $t$ and twice differentiable w.r.t. $\bx,\by$.  The classical solution is used to define family of operators. Given any initial density function $\rho(\bx,0)=h(\bx)\in {\cal L}_1(\mR^n)$, we can define family of operators $\{\mP_t\}_{t\geq 0}$  by
\begin{eqnarray}
\rho(\bx,t)=[\mP_t h](\bx)=\int \Gamma(\bx,\by,t)h(\by)d\by.\label{pf_operator}
\end{eqnarray}

\begin{definition}[P-F operator and generator] The family of operators $\mP:=\{\mP_t\}_{t\geq 0}$ as defined in (\ref{pf_operator}) are called stochastic P-F semi-group. Following \cite{Lasota} (Theorems 11.6.1, 11.7.1, Corollary 11.8.1, and Remark 11.8.1) it also follows that the right hand side of the  F-P equation is also the infinitesimal generator for the stochastic P-F semi-group. In particular, we have
\begin{eqnarray}{\cal A}^{{\bf F}_{n}}_{PF}\psi :=\lim_{t\to 0}\frac{(\mathbb{P}_t-I)\psi}{t}\nonumber\\=-  \nabla \cdot ({\bf F}(\bx) \psi)  +\frac{\sigma^2}{2}\sum_{i,j=1}^n \frac{\partial^2[(\bn \bn^\top)_{ij}\psi]}{\partial x_i\partial x_j} .
\end{eqnarray}
\end{definition}
Since $\mP_t$ is a semi-group with generator ${\cal A}^{{\bf f}_n}_{PF}$, it satisfies
\begin{eqnarray}
\frac{d}{dt} \mP_t \psi={\cal A}^{{\bf F}_n}_{PF}\mP_t \psi\label{FP_diff}.
\end{eqnarray}
The duality between the P-F and Koopman generators can be expressed as follows (\cite{Lasota} Theorem 11.6.1):
\begin{eqnarray}
\int_{\mR^n} [{\cal A}^{{\bf F}_n}_K \varphi](\bx)\psi(\bx)d\bx=\int_{\mR^n}  \varphi(\bx)[{\cal A}^{{\bf F}_n}_{PF}\psi](\bx)d\bx.
\end{eqnarray}
Following \cite{manca2008kolmogorov}, the duality between the two semi-groups can be shown as follows. The adjoint to the Koopman semi-group, i.e., $\mU_t^\star$ can be defined using following relationship
\begin{eqnarray}
\left<\mU_t \varphi,\psi\right>&:=&\int_{\mR^n} [\mU_t \varphi](\bx)\psi(\bx)d\bx\nonumber\\=\int_{\mR^n} \varphi(\bx)[\mU_t^\star\psi](\bx)d\bx
&=&\left<\varphi,\mU_t^\star\psi\right> \label{duality_semigroup}
\end{eqnarray}
We claim that $\mU^\star_t=\mP_t$. We have
{\small
\[\frac{d}{dt}\left<\varphi,\mU_t^\star\psi\right>=\frac{d}{dt}\left<\mU_t\varphi,\psi\right>=\lim_{h\to 0}\frac{1}{h}\left<\frac{(\mU_{t+h}-\mU_t)}{h}\varphi,\psi \right>\]
\[=\lim_{h\to 0}\frac{1}{h}\left<\mU_t\frac{(\mU_h \varphi-\varphi)}{h},\psi\right>=\lim_{h\to 0}\frac{1}{h}\left<\frac{(\mU_h \varphi-\varphi)}{h},\mU_t^\star\psi\right>\]
\[=\left<{\cal A}^{{\bf F}_n}_K\varphi,\mU_t^\star\psi\right>=\left<\varphi,{\cal A}^{{\bf F}_n}_{PF}\mU_t^\star\psi\right>\]}
where we have used the fact that 
\[\lim_{h\to 0}\frac{(\mU_h -I)\varphi}{h}={\cal A}_K^{{\bf F}_n}\varphi,\;\;\left<{\cal A}_K^{{\bf F}_n}\varphi,\psi\right>=\left<\varphi,{\cal A}_{PF}^{{\bf F}_n}\psi\right>.\]
Hence, we obtain
\[\frac{d}{dt}\left<\varphi,\mU_t^\star\psi\right>=\left<\varphi,{\cal A}^{{\bf F}_n}_{PF}\mU^\star_t \psi\right>.\]
Since the above is true for all bounded function $\varphi\in {\cal C}_c^2(\mR^n)$ with compact support, we have 
\[\frac{d}{dt}\mU_t^\star\psi={\cal A}_{PF}^{{\bf F}_n}\mU_t^\star \psi\] and from the uniqueness of solution to the  F-P equation and from (\ref{FP_diff}) it follows that $\mU^\star_t=\mP_t$. Hence we have 
\begin{eqnarray}
\int_{\mR^n} [\mU_t \varphi](\bx)\psi(\bx)d\bx=\int_{\mR^n}  \varphi(\bx)[\mP_t\psi](\bx)d\bx.
\label{duality_operator}\end{eqnarray}








\begin{property}\label{property_positive}
Both the Koopman and P-F operators for the stochastic system are positive i.e., for any positive valued function $\psi(\bx)\geq 0$ and $\varphi(\bx)\geq 0$, we have 
\[[\mU_t\psi](\bx)\geq 0,\;\;\;[\mP_t\varphi](\bx)\geq 0.\]
\end{property}

\section{Stochastic Stability and Stabilization}\label{section_stochasticstability}
In this section, we provide the definition of almost everywhere almost sure stability and prove results providing necessary and sufficient condition for this notion of stability. 


\begin{definition}[Almost everywhere (a.e.) almost sure stability] \label{def_aestability} The equilibrium point at $\bx=0$ for system (\ref{sys}) is said to be  a.e. almost sure stable with respect to finite measure $\mu\in {\cal M}(\mR^n)$  if for almost all initial condition $\bx$ w.r.t. measure $\mu$, we have
\begin{eqnarray}
{\rm Prob}\{\lim_{t\to \infty}\bX_t^\bx=0\}=1.
\end{eqnarray}

\end{definition}

\begin{definition}[Local almost sure (a.s.) asymptotically stable]\label{localasasystable}The equilibrium point at $\bx=0$ for system (\ref{sys}) is said to be locally a.s. asymptotically stable if there exists a neighborhood $\cal N$ of the origin such that for all $\bx\in {\cal N}$, we have
\begin{eqnarray}
&{\rm Prob}\{\bX_t^\bx\in {\cal N}\}=1 \;\;\forall t\geq 0, \;\;\nonumber\\
&{\rm Prob}\{\lim_{t\to \infty}\bX_t^\bx=0\}=1.
\end{eqnarray}
\end{definition}


\begin{definition}[Equivalent measures]
Two measures $\mu_1\in {\cal M}(\mR^n)$ and $\mu_2\in{\cal M}(\mR^n)$ are said to be equivalent i.e., $\mu_1 \approx \mu_2$ provided $\mu_1(B)=0$ if and only if $\mu_2(B)=0$ for all set $B\in {\cal B}(\mR^n)$. 
\end{definition}
To prove the results providing necessary and sufficient condition for a.e. almost sure stability for system  we make following assumption on the system dynamics (\ref{sys}). 
\begin{assumption}\label{local_stability1}
 We assume that the equilibrium point $\bx=0$ for system (\ref{sys}) is locally a.s. asymptotically stable (Definition \ref{localasasystable}). 
\end{assumption}


\begin{theorem}\label{lemma1}
For system (\ref{sys}) satisfying Assumption \ref{local_stability1}, the $\bx=0$ is a.e. a.s. stable with respect to measure $\mu$ (Definition \ref{def_aestability}) if and only if
\begin{eqnarray}
\lim_{t\to \infty}[\mP_t h](\bx)=0
\end{eqnarray}
where, $h=\frac{d \mu}{d\bx}$ is assumed to be the Radon-Nikodym derivative of $\mu$ and $\mathbb{P}_t$ is stochastic P-F semi-group for system (\ref{sys}).
\end{theorem}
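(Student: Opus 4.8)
The plan is to bridge the pathwise stochastic condition of Definition~\ref{def_aestability} with the decay of the P-F semi-group by feeding the indicator of $\bS$ into the Koopman--P-F duality (\ref{duality_operator}). Taking $\varphi=\mathds{1}_{\bS}$ and $\psi=h$, and recalling from (\ref{Koopman_operator}) that $[\mU_t\mathds{1}_{\bS}](\bx)=\mE_\bx[\mathds{1}_{\bS}(\bX_t^\bx)]={\rm Prob}_\bx\{\bX_t^\bx\in\bS\}$, I obtain the identity
\begin{eqnarray}
\int_{\bS}[\mP_t h](\bx)\,d\bx&=&\int_{\mR^n}[\mU_t\mathds{1}_{\bS}](\bx)\,h(\bx)\,d\bx\nonumber\\
&=&\int_{\mR^n}{\rm Prob}_\bx\{\bX_t^\bx\in\bS\}\,d\mu(\bx).
\end{eqnarray}
The duality (\ref{duality_operator}), derived for smooth compactly supported observables, extends to $\varphi\in{\cal L}_\infty$ and $\psi\in{\cal L}_1$ since $\mP_t$ and $\mU_t$ are mutually adjoint bounded operators on ${\cal L}_1$ and ${\cal L}_\infty={\cal L}_1^\star$, which legitimizes the bounded, non-smooth, non-compactly-supported choice $\mathds{1}_{\bS}$. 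Thus the convergence $\lim_{t\to\infty}[\mP_t h]=0$ (read in ${\cal L}_1(\bS)$, i.e.\ vanishing of probability mass outside $B_\delta$) is exactly the statement that the $\mu$-averaged escape probability $g_t(\bx):={\rm Prob}_\bx\{\bX_t^\bx\in\bS\}$ tends to zero, and the theorem reduces to relating this averaged escape to a.e.\ a.s.\ convergence of trajectories.

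For necessity ($\Rightarrow$), I would assume a.e.\ a.s.\ stability: for $\mu$-a.e.\ $\bx$ we have $\bX_t^\bx\to 0$ with probability one. Since the origin lies in the interior of $B_\delta$, each such trajectory eventually enters $B_\delta$, so $\mathds{1}_{\bS}(\bX_t^\bx)\to 0$ almost surely; the bounded convergence theorem then gives $g_t(\bx)\to 0$ pointwise for $\mu$-a.e.\ $\bx$. Because $0\le g_t\le 1$ and $\mu$ is finite, the dominated convergence theorem upgrades this to $\int_{\mR^n}g_t\,d\mu\to 0$, which by the bridge identity is precisely $\lim_{t\to\infty}\int_{\bS}[\mP_t h]\,d\bx=0$.

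For sufficiency ($\Leftarrow$), I would assume $\int g_t\,d\mu\to 0$, i.e.\ $g_t\to 0$ in ${\cal L}_1(\mu)$, and pass to a subsequence $t_k$ with $g_{t_k}(\bx)\to 0$ for $\mu$-a.e.\ $\bx$. The decisive uniform-in-$t$ observation is that, with $\tau_\bx:=\inf\{t\ge 0:\bX_t^\bx\in B_\delta\}$, the event $\{\tau_\bx=\infty\}=\{\bX_t^\bx\in\bS\ \forall t\}$ is contained in $\{\bX_t^\bx\in\bS\}$ for every fixed $t$, whence $g_t(\bx)\ge{\rm Prob}_\bx\{\tau_\bx=\infty\}$; evaluating along $t_k$ forces ${\rm Prob}_\bx\{\tau_\bx=\infty\}=0$ for $\mu$-a.e.\ $\bx$, so the trajectory reaches $B_\delta$ in finite time almost surely. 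Choosing $\delta$ small enough that $\overline{B_\delta}\subseteq{\cal N}$, the local stability neighborhood of Assumption~\ref{local_stability1}, path continuity gives $\bX_{\tau_\bx}^\bx\in{\cal N}$ on $\{\tau_\bx<\infty\}$, and the strong Markov property together with local a.s.\ asymptotic stability (Definition~\ref{localasasystable}) guarantees convergence to the origin from that entry point almost surely. Combining, ${\rm Prob}_\bx\{\bX_t^\bx\to 0\}=1$ for $\mu$-a.e.\ $\bx$.

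The routine content is the duality bookkeeping and the necessity direction; the crux is the sufficiency direction, where one must convert a time-wise, $\mu$-averaged decay of the escape probability into a pathwise almost sure convergence statement for almost every individual initial condition. The two devices that make this passage work are the uniform lower bound $g_t(\bx)\ge{\rm Prob}_\bx\{\tau_\bx=\infty\}$, which turns the subsequential a.e.\ limit into almost sure reachability of $B_\delta$, and Assumption~\ref{local_stability1} invoked through the strong Markov property, which is exactly what promotes ``almost surely reaches a neighborhood of the origin'' to ``almost surely converges to the origin.'' The only technical care needed is the interpretation of the limit in ${\cal L}_1(\bS)$ and the extension of (\ref{duality_operator}) to the indicator observable, both addressed above.
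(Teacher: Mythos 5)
Your proposal shares the paper's central device---the Koopman--P-F duality applied to indicator observables, which converts ${\rm Prob}_\bx\{\bX_t^\bx\in B\}=[\mU_t\chi_B](\bx)$ into statements about $\mP_t h$---and your necessity direction is essentially the paper's argument specialized to the single set $B=\bS$: almost sure convergence to the origin, bounded convergence over the sample space, then dominated convergence over the finite measure $\mu$. The paper runs the same computation for \emph{every} Borel set $B\in{\cal B}(\bS)$ and then pulls the limit inside the duality pairing, which is how it reaches the pointwise conclusion $\lim_{t\to\infty}[\mP_t h](\bx)=0$ for a.e.\ $\bx\in\bS$ rather than your integrated one. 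Where you genuinely diverge is sufficiency. The paper builds the bad set $R=\bigcap_{T}R_T$ with $R_T=\{\bx:{\rm Prob}(\bX_t^\bx\in\bS)\geq\epsilon\ \mbox{for some}\ t>T\}$, asserts the equivalence $\bx\in R\iff{\rm Prob}\{\bX_t^\bx\in R\}=1$ without proof, and argues $\mu_0(R)=0$ via duality and a limit interchange. Your route---the uniform bound $g_t(\bx)\geq{\rm Prob}_\bx\{\tau_\bx=\infty\}$ for the escape probability $g_t(\bx):={\rm Prob}_\bx\{\bX_t^\bx\in\bS\}$ and the hitting time $\tau_\bx$ of $B_\delta$, a subsequence extraction from ${\cal L}_1$ convergence, then the strong Markov property at $\tau_\bx$ combined with Assumption~\ref{local_stability1}---is cleaner and more complete; in particular it makes explicit the step the paper leaves implicit, namely why almost sure entry into a neighborhood of the origin promotes to almost sure convergence.

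The one substantive mismatch is your reinterpretation of the condition: you prove the equivalence with $\int_\bS[\mP_t h](\bx)\,d\bx\to 0$, whereas the theorem and the paper's proof treat $[\mP_t h](\bx)\to 0$ pointwise for a.e.\ $\bx$. These are not interchangeable for free: pointwise decay yields the integrated decay only through a dominated-convergence step, and no dominating integrable function for the family $\{[\mP_t h]\}_{t\geq 0}$ is available a priori (mass can spread inside $\bS$ toward infinity, keeping $\int_\bS[\mP_t h]\,d\bx$ bounded away from zero while $[\mP_t h]\to 0$ pointwise); conversely, ${\cal L}_1$ decay yields pointwise decay only along subsequences. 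So, read against the literal statement, your sufficiency direction assumes a hypothesis not implied by the stated one, and your necessity direction delivers a weaker conclusion than the stated one. To be fair, the paper bridges exactly this gap with unjustified interchanges of its own (e.g.\ $\lim_t\int[\mP_t h_0]\,d\bx=\int\lim_t[\mP_t h_0]\,d\bx$ in its sufficiency part), so your proof is not less rigorous than the paper's---but to claim the theorem as written you must either supply the missing domination/tightness argument or state, as you only partially do, that you are proving the ${\cal L}_1$ form of the condition.
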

The proof of this theorem is provided in the Appendix.

\section{Main Results}\label{section_main}
In this section, we present the main results of this paper on the convex formulation to the optimal control problem. We consider optimal control problem for affine in control dynamical system of the form
\begin{eqnarray}
\dot \bx=\bar {\bf f}(\bx)+{\bf g}(\bx)\bar u+\sigma\bn(\bx)\xi\label{cont_syst1}
\end{eqnarray}
where, $\bx\in \mathbb{R}^n$ is the state, $\bar u\in \mathbb{R}$ is the control input. For the simplicity of presentation we present the results for the case of single input. The results generalize to multi-input case in a straight forward manner. The $\bar {\bf f}, \bg$, and $\bn$ vector fields are assumed to satisfy Assumption \ref{assume_diff}. Furthermore, we make following stabilizability assumption on the pair $(\bar{\bf f},\bg)$. 
\begin{assumption}\label{assumption_stabilizability}
We assume that the linearization of system dynamics at the origin $\bx=0$ i.e., $(\frac{\partial \bar {\bf f}}{\partial \bx}(0),\bg(0))$ is stabilizable. 
\end{assumption}
Using the above stabilizibility assumption, we can design a local stable controller using time series data. The detailed procedure for the design of such controller is given in Section \ref{local_control_data}. Let $u_\ell$ be the locally stabilizing  controller. Now defining ${\bf f}(\bx):=\bar {\bf f}(\bx)+\bg(\bx) u_\ell$ and $u=\bar u-u_\ell$, we can rewrite control system (\ref{cont_syst1}) as follows
\begin{equation}\label{cont_syst10}
    \dot \bx={\bf f}(\bx)+{\bf g}(\bx) u+\sigma\bn(\bx)\xi 
\end{equation}

\begin{remark}\label{remark_localstability}
Following is true for the  stochastic control  system (\ref{cont_syst10}).  With the control input $u=0$, the origin of system \eqref{cont_syst10} is locally a.s. asymptotically stable in small neighborhood of the origin, $\cal N$.  \cite[Proposition 4.1]{bardi2005almost}.
\end{remark}
 
Let $B_\delta$ be the small neighborhood of the origin for some fixed $\delta>0$ such that $B_\delta\subset {\cal N}$ and that $\bS:=\mR^n\setminus B_\delta$.


\subsection{Convex Formulation to the Stochastic OCP Using P-F operator}
Consider the stochastic OCP, where the objective is to minimize the cost function of the following form
\begin{eqnarray}
&J^\star(\mu_0)=\inf_u\int_{\bS}\mathbb{E}_{\bx}\left[\int_0^\infty q(\bX_t^\bx)+ r u_t^2 \;dt\right] d\mu_0(\bx)\label{costfunction}\\
&{\rm s.\;t.\; (\ref{cont_syst10})}\nonumber
\end{eqnarray}
for some $r>0$. The $\mathbb{E}_\bx$ stands for the expectation taken w.r.t. different realization of stochastic trajectories starting from initial condition $\bx$. We now make 
following assumption on the state cost function $q(\bx)$ and the measure $\mu_0$.
\begin{assumption}\label{assume_costfunction}
We assume that the state cost function $q: \mR^n\to \mR^{+}$ is zero at the origin and uniformly bounded away from zero outside the neighborhood $\cal N$. Furthermore, the measure $\mu_0$ is assumed to be equivalent to Lebesgue with Radon-Nikodym derivative $h_0$ i.e., $\frac{d\mu_0}{d\bx}=h_0(\bx)\in {\cal L}_1(\mR^n)\cap {\cal C}^2(\mR^n,\mR_{\geq 0})$. 
\end{assumption}
Some comments are necessary for the nature of the cost function chosen for the stochastic optimal control. Define
\[V(\bx)=\mE_\bx\left[\int_0^\infty q(\bX_t^\bx)+ ru_t^2\;dt\right]\]
where the expectation are taken along different realization of stochastic trajectory starting from initial condition $\bx$. The  $V(\bx)$ is the cost function considered in traditional stochastic optimal control problem \cite{stengel1986stochastic}. Our proposed cost function in (\ref{costfunction}) can be expressed in terms of  $V(\bx)$ as
\begin{align}J(\mu_0)=\int_{\bS}V(\bx)d\mu_0(\bx)\label{cost1}
\end{align}
So the proposed cost function is a weighted form of the traditional cost function, $V(\bx)$, where the weights are determined by the measure $\mu_0$. Another important distinction is that the cost function in (\ref{costfunction}) is minimized over the set $\bS$ excluding the small neighborhood of the origin. We clarify the reason for this exclusion in Section \ref{section_localoptimal}. 
To ensure that the cost function in (\ref{cost1}) is finite  we introduce following definition of admissible control and initial measure $\mu_0$. 
\begin{definition} For system (\ref{cont_syst10}), a feedback controller $u=k(\bx)$ is said to be admissible if $k(0)=0$, $k\in {\cal C}^4(\mR^n)$ , $k(\bx)$ a.e. almost sure stabilize the origin, and 
\begin{align}
    V(\bx)=\mE_\bx\left[\int_0^\infty q(\bX_t^\bx)+rk(\bX^\bx_t)dt\right]\leq M \parallel \bx\parallel^{2\bar \gamma}
\end{align}
for some integer $\bar\gamma\geq 1$ and a.e. w.r.t. Lebesgue measure $\bx$. For a given admissible control, the initial measure $\mu_0$ or the associated density function, $h_0(\bx)$, is said to be admissible if 
\begin{align}
    \int_\bS V(\bx)d\mu_0(\bx)=\int_\bS V(\bx)h_0(\bx)d\bx<\infty\label{integrable}
\end{align}
\end{definition}
\begin{remark}
For a given admissible control, it is always possible to choose admissible density function, $h_0(\bx)$. For example, we can choose $h_0(\bx)=\frac{1}{\parallel\bx \parallel^{2(\bar\gamma+1)}}$ to ensure condition (\ref{integrable}) is satisfied. 
\end{remark}

The following assumption is made on the solution of the optimal control problem. 

\begin{assumption}\label{assume_OCP}
We assume that there exists a admissible feedback controller and that the optimal input is feedback in nature i.e., $u^\star=k^\star(\bx)$, with the function $k$ assumed to be ${\cal C}^4$ function of $\bx$.   
\end{assumption}

With the assumed feedback form of the control input, the SOCP can be written as 
\begin{eqnarray}
 \inf\limits_{k\in {\cal C}^4} &\int_{\bS}\mathbb{E}_\bx\left[\int_0^\infty q(\bX_t^\bx)+ r k(\bX_t^\bx)^2 \;dt\right] d\mu_0(\bx)\nonumber\\
 {\rm s.t.}&\dot \bx={\bf f}(\bx)+\bg(\bx)k(\bx)+\sigma\bn(\bx)\xi\label{ocp_main}
\end{eqnarray}

Following is one of the main results of this paper. 
\begin{theorem}\label{Theorem_mainPF}
Consider the optimal control problem (\ref{ocp_main}), with the system dynamics, cost function, and optimal control satisfying Assumptions \ref{assumption_stabilizability}, \ref{assume_costfunction}, and \ref{assume_OCP} respectively. The OCP (\ref{ocp_main}) can be written as following infinite dimensional convex optimization problem 
\begin{eqnarray}
J^\star(\mu)=\inf_{\rho\in {\cal S},\bar \brho\in {\cal C}^4} \;\;\; \int_{\bS} q(\bx)\rho(\bx)+r \frac{\bar \rho(\bx)^2}{\rho(\bx)} d\bx\nonumber\\
{\rm s.t}.\;\;\;\nabla\cdot ({\bf f}\rho +\bg\bar \rho)-\frac{\sigma^2}{2}\sum_{i,j=1}^n \frac{\partial^2[(\bn \bn^\top)_{ij}\rho]}{\partial \bx_i\partial \bx_j}=h_0.
\label{eqn_ocp1}
\end{eqnarray}
where $ {\cal S}:={\cal L}_1(\bS)\cap {\cal C}^2(\bS,\mR_{\geq 0})$.
The optimal feedback control input is recovered from the solution of the above optimization problem as  
\begin{eqnarray}
k(\bx)=\frac{\bar \rho(\bx)}{\rho(\bx)}.
\end{eqnarray}
Furthermore, the optimal control $k(\bx)$ is  a.e. a.s.  stabilizing the origin. 
\end{theorem}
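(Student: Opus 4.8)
The plan is to prove the theorem by a change of variables that converts the primal search over feedback laws $k$ into a search over a pair of densities $(\rho,\bar\rho)$. Fix an admissible control $k$, let $\mP_t$ denote the stochastic P-F semigroup of the closed-loop system $\dot\bx={\bf f}(\bx)+\bg(\bx)k(\bx)+\sigma\bn(\bx)\xi$, and define the occupation density and flux density
\begin{eqnarray}
\rho(\bx):=\int_0^\infty [\mP_t h_0](\bx)\,dt,\qquad \bar\rho(\bx):=k(\bx)\rho(\bx).\nonumber
\end{eqnarray}
The first claim to establish is that $(\rho,\bar\rho)$ is feasible for (\ref{eqn_ocp1}) and attains the same cost, and conversely that every feasible pair arises this way; convexity and the recovery formula $k=\bar\rho/\rho$ then follow essentially for free.

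To obtain the constraint I would integrate the Fokker-Planck equation $\frac{d}{dt}\mP_t h_0={\cal A}_{PF}\mP_t h_0$ (with ${\cal A}_{PF}$ the generator of the closed-loop vector field) over $t\in[0,\infty)$. Admissibility of $k$ guarantees $\mP_t h_0\to 0$, so the left side integrates to $\mP_\infty h_0-h_0=-h_0$, while interchanging the time integral with the spatial generator yields ${\cal A}_{PF}\rho=-h_0$, i.e.
\begin{eqnarray}
\nabla\cdot({\bf f}\rho+\bg\bar\rho)-\frac{\sigma^2}{2}\sum_{i,j=1}^n\frac{\partial^2[(\bn\bn^\top)_{ij}\rho]}{\partial\bx_i\partial\bx_j}=h_0,\nonumber
\end{eqnarray}
after substituting $\bg k\rho=\bg\bar\rho$. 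This is exactly the linear equality constraint of (\ref{eqn_ocp1}). Justifying the interchange of integral and generator together with the boundary decay $\mP_t h_0\to 0$ is where the admissibility and integrability hypotheses, and Theorem \ref{lemma1}, must be invoked carefully.

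For the objective I would write $V(\bx)=\mE_\bx[\int_0^\infty q(\bX_t^\bx)+rk(\bX_t^\bx)^2\,dt]=\int_0^\infty[\mU_t(q+rk^2)](\bx)\,dt$ and transport it onto $\rho$ using the P-F/Koopman duality (\ref{duality_operator}), which gives $\int_\bS V\,h_0\,d\bx=\int_0^\infty\langle q+rk^2,\mP_t h_0\rangle\,dt=\langle q+rk^2,\rho\rangle$. Since $k^2\rho=\bar\rho^2/\rho$, the cost becomes $\int_\bS q\rho+r\,\bar\rho^2/\rho\,d\bx$, matching (\ref{eqn_ocp1}). Convexity is then immediate: the first term is linear in $\rho$, the perspective function $(\rho,\bar\rho)\mapsto\bar\rho^2/\rho$ is jointly convex on $\rho>0$, and the constraint is linear in $(\rho,\bar\rho)$.

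Finally I would close the equivalence and prove stability. The map $(\rho,\bar\rho)\mapsto k=\bar\rho/\rho$ inverts the change of variables wherever $\rho>0$ (finiteness of the perspective term forces $\bar\rho=0$ on $\{\rho=0\}$), so the two problems share the same optimal value and $k=\bar\rho/\rho$ recovers the optimizer. For a.e. a.s. stability, I would argue that a feasible $\rho\in{\cal S}$ is, by construction, the time integral of $\mP_t h_0$ for the closed-loop law $k$, so its membership in ${\cal L}_1(\bS)$ forces $\mP_t h_0\to 0$, which by Theorem \ref{lemma1} yields a.e. a.s. stability with respect to $\mu_0$. The main obstacle I anticipate is precisely this last step: showing that feasibility of the convex program, namely the existence of a nonnegative integrable $\rho$ solving the stationary Fokker-Planck equation with strictly positive source $h_0$, is \emph{sufficient} to certify stability of the recovered closed loop — a stochastic analogue of Rantzer's density criterion — rather than merely necessary for it.
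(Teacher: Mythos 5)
Your proposal follows essentially the same route as the paper's own proof: the same occupation-density change of variables $\rho(\bx)=\int_0^\infty[\mP_t^c h_0](\bx)\,dt$, $\bar\rho=k\rho$, the same use of Koopman--P-F duality to transport the cost onto $\rho$, the same time-integration of the semigroup/generator identity to obtain the linear equality constraint, and the same appeal to integrability of $\rho$ together with Theorem \ref{lemma1} for a.e.\ a.s.\ stability. The obstacle you flag at the end --- that feasibility of the convex program must be shown \emph{sufficient} for stability of the recovered closed loop, not merely necessary --- is a fair concern, but the paper treats it no more rigorously than you do, asserting it from integrability of $\rho^\star$ and the argument following Eq.~(\ref{rhointegrable}).
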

\begin{proof}
\begin{eqnarray}
J(\mu)=\int_{\bS}\mathbb{E}_{\bx}\left[\int_0^\infty q(\bX_t^\bx)+ u_t^2 \;dt\right] d\mu_0(\bx)
\end{eqnarray}
The expectation can be moved inside  the integral by using Fubini's theorem \cite{kallenberg2006foundations} as the cost is positive and finite following Assumption \ref{assume_OCP}. Additionally, we consider the fact that $d\mu_0=h_0 d\bx$ to obtain
\[\int_{\bS}\int_0^\infty \mE_\bx \left[q(\bX_t^\bx)+u_t^2\right] dt h_0(\bx) d\bx.\]
Using Assumption (\ref{assume_OCP}) on the existence of feedback input for which the cost function is finite, we obtain after substituting $u=k(\bx)$
\begin{eqnarray}J=\int_{\bS}\int_0^\infty \mE_\bx[\varphi(\bX_t^\bx)]dt h_0(\bx)d\bx\nonumber\\=\int_{\bS}\int_0^\infty [\mU_t^c\varphi](\bx)dt h_0(\bx)d\bx,\end{eqnarray}
where we defined $\varphi(\bx):=q(\bx)+r k(\bx)^2$ and $\mU_t^c$ and $\mP_t^c$ are notation for the Koopman and P-F semi-groups for the closed loop system ${\bf f}_c:={\bf f}+\bg k+\sigma\bn \xi$.
Using the linear property of the Koopman operator and the duality of Koopman and P-F semi-groups we obtain
\begin{eqnarray}
J=\int_{\bS}\int_0^\infty \varphi(x)[\mP_t^c h_0](\bx)dt d\bx=\int_{\bS}\varphi(\bx)\int_0^\infty[\mP_t^c h_0](\bx) d\bx.\nonumber\\\label{ss2}
\end{eqnarray}
Using the Assumption \ref{assume_costfunction} on the uniform lower bound, say $\kappa_0$, for state cost function outside $\cal N$, and Assumption \ref{assume_OCP} on the finite value of cost for the feedback control input , we obtain
\begin{eqnarray}\kappa_0 \int_{\bS}\int_0^\infty[\mP_t^ch_0](\bx)dt d\bx\leq \int_{\bS}\varphi(\bx)\int_0^\infty[\mP_t^ch_0](\bx)dtd\bx\leq M\nonumber\label{bound}\\
\end{eqnarray}
for some constant $M<\infty$. Define 
\begin{align}
\rho(\bx):=\int_0^\infty [\mP_t h_0](\bx) dt \label{rhointegrable}
\end{align}
From (\ref{bound}) it follows that $\rho(\bx)$ is well defined for almost all $\bx\in \bS$ and that $\rho(\bx)$ is integrable function on $\bS$. 
Following the definition of P-F operator and the differentiability assumption made on $h_0$ (Assumption \ref{assume_costfunction}) it follows that the $[\mP_t^c h_0](\bx)$ is absolutely continuous w.r.t. time and hence using Barbalat Lemma it follows that 
\begin{eqnarray}
\lim_{t\to \infty}[\mP_t^c h_0](\bx)=0
\end{eqnarray}
for almost all $\bx \in \bS$. 
We next show that $\rho(\bx)$ satisfy  the following equation
\begin{equation}
\nabla\cdot (({\bf f}_c(\bx) \rho(\bx))-\frac{\sigma^2}{2}\sum_{i,j=1}^n \frac{\partial^2[(\bn \bn^\top)_{ij}\rho]}{\partial \bx_i\partial \bx_j}=h_0(\bx),\label{steady_pde}
\end{equation}
for a.e. $\bx\in \bS$.
Substituting the integral formula for $\rho(\bx)$ in (\ref{steady_pde}), we obtain
\begin{eqnarray}
&&\nabla\cdot ({\bf f}_c(\bx) \rho(\bx))-\frac{\sigma^2}{2}\sum_{i,j=1}^n \frac{\partial^2[(\bn \bn^\top)_{ij}\rho]}{\partial \bx_i\partial \bx_j}\nonumber\\&=&\int_0^\infty \nabla\cdot ({\bf f}_c(\bx)[\mathbb{P}_t^c h_0](\bx)) dt\nonumber\\
&-&\int_0^\infty\frac{\sigma^2}{2}\sum_{i,j=1}^n \frac{\partial^2[(\bn \bn^\top)_{ij}[\mathbb{P}_t^c h_0](\bx)]}{\partial \bx_i\partial \bx_j}dt\nonumber\\&=&\int_0^\infty -\frac{d}{dt}[\mathbb{P}^c_t h_0](\bx)dt=-[\mathbb{P}^c_th_0](\bx)\Big|^{\infty}_{t=0}=h_0(\bx)\label{eq11}
\end{eqnarray}
where we have used the infinitesimal generator property of P-F operator(i.e.,  Eq. (\ref{FP_diff})) and the fact that $\lim_{t\to \infty} [\mathbb{P}_t^c h_0](\bx)=0$.
Furthermore, since $h_0> 0$, it follows that $\rho> 0$ from the positivity property of P-F semi-group $\mathbb{P}_t^c$. 
Combining (\ref{ss2}) and (\ref{eq11}) and by defining $\bar \rho(\bx):=\rho(\bx) k(\bx)$, it follows that the SOCP problem can be written as  convex optimization problem (\ref{eqn_ocp1}). The optimal control $k^\star(\bx)$ obtained as the solution of optimization problem (\ref{eqn_ocp1}) is
a.e. uniform stochastic stabilizing follows from the fact the optimal solution $\rho^\star(\bx)$ is integrable and argument following Eq. (\ref{rhointegrable}). The optimal solution  $\rho^\star(\bx)\in {\cal S}$ follows from the fact that $h_0\in {\cal L}_1(\mR^n,\mR_{> 0})\cap {\cal C}^2(\mR^n)$ from Assumption \ref{assume_costfunction}, the definition of $\rho$ Eq. (\ref{rhointegrable}) and the P-F operator (\ref{pf_operator}).

\end{proof}

\subsection{Convex Formulation Using P-F operator with input and state constraints}
It is possible to incorporate the hard constraints on the control input and the state in the convex formulation of SOCP as discussed in the previous section.  These hard constraints can be written convexily in the SOCP formulation. Assume that the control input need to satisfy $|u|\leq M_1$ for some constant $M_1$. Using the fact that the control input is feedback and of the form $u=k(\bx)=\frac{\bar \rho^2(\bx)}{\rho(\bx)}$, the hard constraints on the control input can be written convexily in terms of optimization variable $\rho$ and $\bar \rho$ as 
\[\bar \rho^2(\bx)\leq M_1 \rho(\bx).\]
For state constraints, we are interested in restricting the states to remain in a certain region of the state space, say ${\cal R}$. Since the system is stochastic, the constraints can be imposed in the expectation and  can be written as 
\[\int_\bS \mE_\bx\left[\int_0^\infty  1_{{\cal R}c}(\bX_t^\bx) dt\right]d\mu_0(\bx)=0\]
where $1_{{\cal R}_c}(\bx)$ is a indicator function of set ${\cal R}_c$, and ${\cal R}_c$ is complement of set ${\cal R}$. The above condition captures that the expected value of trajectory entering in the set $\cal R$ is zero starting from initial condition $\bx$. The initial conditions are distributed according to the initial measure $\mu_0$ as defined in the SOCP formulation. The state constraints can be written convexly using the following results. 
\begin{lemma}
For the optimal control problem (\ref{ocp_main}) under Assumptions \ref{assumption_stabilizability}, \ref{assume_costfunction}, and \ref{assume_OCP}, we have
\begin{align}
 \int_\bS \mE_\bx\left[\int_0^\infty  1_{{\cal R}_c}(\bX_t^\bx) dt\right]d\mu_0(\bx)=\int_{\bS} 1_{{\cal R}_c}(\bx) \rho(\bx)d\bx   
\end{align}

\end{lemma}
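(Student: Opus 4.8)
The plan is to establish the claimed identity by moving the expectation and time integral onto the P-F side, exactly mirroring the manipulations already carried out in the proof of Theorem~\ref{Theorem_mainPF}. The key observation is that the indicator $1_{{\cal R}_c}$ plays precisely the role that the cost integrand $\varphi(\bx)=q(\bx)+rk(\bx)^2$ played there, so the same chain of equalities applies verbatim with $\varphi$ replaced by $1_{{\cal R}_c}$. First I would note that $1_{{\cal R}_c}$ is a bounded (indeed, $\{0,1\}$-valued) measurable function, hence in ${\cal L}_\infty(\mR^n)$, so that the Koopman operator acts on it and the identity $[\mU_t^c 1_{{\cal R}_c}](\bx)=\mE_\bx[1_{{\cal R}_c}(\bX_t^\bx)]$ from Definition~\ref{definition_koopmangenerator} is available.

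The main steps are as follows. Since the integrand is nonnegative, Tonelli's theorem (the nonnegative counterpart of Fubini, \cite{kallenberg2006foundations}) lets me interchange the expectation, the time integral, and the spatial integral against $d\mu_0=h_0\,d\bx$ freely, without needing any finiteness hypothesis on this particular integral. This gives
\begin{align*}
\int_\bS \mE_\bx\left[\int_0^\infty 1_{{\cal R}_c}(\bX_t^\bx)\,dt\right]d\mu_0(\bx)
=\int_\bS\int_0^\infty [\mU_t^c 1_{{\cal R}_c}](\bx)\,dt\, h_0(\bx)\,d\bx.
\end{align*}
Next I would apply the duality between the Koopman and P-F semi-groups (Eq.~(\ref{duality_operator})) to transfer the operator from the observable $1_{{\cal R}_c}$ onto the density $h_0$, yielding
\begin{align*}
\int_\bS\int_0^\infty 1_{{\cal R}_c}(\bx)[\mP_t^c h_0](\bx)\,dt\,d\bx
=\int_\bS 1_{{\cal R}_c}(\bx)\left(\int_0^\infty [\mP_t^c h_0](\bx)\,dt\right)d\bx.
\end{align*}
Finally, recognizing the inner time integral as exactly the definition of $\rho(\bx)$ in Eq.~(\ref{rhointegrable}) completes the proof, giving $\int_\bS 1_{{\cal R}_c}(\bx)\rho(\bx)\,d\bx$.

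The only genuine subtlety, and the step I would scrutinize most carefully, is the justification of the interchange of integration order together with the application of the duality relation~(\ref{duality_operator}). That duality was derived for smooth compactly supported test functions $\varphi\in{\cal C}_c^2(\mR^n)$, whereas $1_{{\cal R}_c}$ is merely bounded and certainly not smooth; making the argument rigorous requires either a density/approximation argument (approximating $1_{{\cal R}_c}$ by functions in ${\cal C}_c^2$ and passing to the limit using dominated convergence, the positivity of the semi-groups from Property~\ref{property_positive}, and the finiteness of $\rho$ established after Eq.~(\ref{rhointegrable})) or appealing to the fact that the adjoint relation $\mU_t^{c\star}=\mP_t^c$ extends from the dense test-function class to all of ${\cal L}_\infty$ by duality of the ${\cal L}_1$--${\cal L}_\infty$ pairing. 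A second minor point worth flagging is convergence of the time integral: since $\rho(\bx)=\int_0^\infty[\mP_t^c h_0](\bx)\,dt$ is finite for a.e.\ $\bx\in\bS$ by the argument following~(\ref{rhointegrable}), and the integrand is multiplied by the bounded factor $1_{{\cal R}_c}\le 1$, the resulting spatial integral is dominated by $\int_\bS\rho(\bx)\,d\bx<\infty$, so no integrability issue arises. Apart from these measure-theoretic bookkeeping items, the proof is a direct transcription of the Koopman-to-P-F transfer already used in Theorem~\ref{Theorem_mainPF}.
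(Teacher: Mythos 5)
Your proposal is correct and follows essentially the same route as the paper: the paper's own proof is a one-sentence invocation of the Koopman--P-F duality (Eq.~(\ref{duality_operator})) together with the definition of $\rho$ in Eq.~(\ref{rhointegrable}), which is exactly the chain of equalities you carry out in detail (with $1_{{\cal R}_c}$ playing the role of the cost integrand $\varphi$ from Theorem~\ref{Theorem_mainPF}). Your additional scrutiny of the Tonelli interchange and of extending the duality relation from ${\cal C}_c^2$ test functions to bounded indicators supplies rigor the paper leaves implicit.
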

\begin{proof}
The proof follows by using the duality between the Koopman and the P-F operator and from the Definition of density function $\rho$ (Eq. (\ref{rhointegrable})).  
\end{proof}
Using the results of the above theorem the state constraint can be written convexily in terms of optimization variable $\rho$ as
\begin{align}\int_{\bS} 1_{{\cal R}_c}(\bx) \rho(\bx)d\bx=0 
\end{align}
In \cite{safetyPF}, we have used these state constraints in the convex formulation of optimal navigation problems with obstacle avoidance. 

\subsection{Convex Approach to Stochastic Stabilization Using P-F operator}
The convex formulation to the stochastic stabilization will arise as a particular case of the proposed convex formulation to the stochastic optimal control problem. In particular, the constraints in the optimization problem (\ref{eqn_ocp1}) will lead to the stability of the origin for the stochastic system. The convex formulation to the stochastic stabilization also appears in the work of \cite{van2006almost}. However, the proposed data-driven approach for stochastic stabilization is new to this paper. The stochastic a.e. stabilization problem w.r.t. measure $\mu_0$ can be posed as following feasibility problem

\begin{eqnarray}
\nabla\cdot({\bf f}\rho+\bg \bar\brho)-\frac{\sigma^2}{2}\sum_{i,j=1}^n \frac{\partial^2[(\bn \bn^\top)_{ij}\rho]}{\partial \bx_i\partial \bx_j}=h_0.\label{stabilization_constraints}
\end{eqnarray}
where $h_0$ is the density corresponding to the measure $\mu_0$. 
The stabilizing feedback controller can be recovered as $\bk(\bx)=\frac{\bar \brho(\bx)}{\rho(\bx)}$. In our computational section, we outline a procedure for the constructing the finite dimensional approximation of the stability constraints (\ref{stabilization_constraints}) from the time-series data.


\subsection{Local Optimal Controller}\label{section_localoptimal}
The density function $\rho$ for the solution of optimization problem satisfy following integral formula
\[\rho(\bx)=\int_0^\infty [\mathbb{P}^c_t h_0](\bx) dt\]
where $\mathbb{P}_t^c$ is the P-F operator for the closed-loop system $\dot \bx={\bf f}(\bx)+{\bf g}(\bx){k}(\bx)+\bn(\bx)\xi$ and hence $\rho$ serves as an occupancy measure i.e., $\int_A \rho(\bx) d\bx=\left<\int_0^\infty[\mathbb{U}_t \chi_A] dt,h\right>$ signifies the amount of time closed-loop system trajectories spend in the set $A$ with initial condition supported w.r.t. measure $\mu_0$. Because of this, $\rho(\bx)$ has singularity at the equilibrium point  stabilized by the closed-loop system as all the trajectories are funnel through the neighborhood of the origin to the origin. Due to  this singularity at the origin, we need to exclude the small neighborhood around the origin for the proper parameterization of the density function $\rho$ in the computation of optimal control. In particular, the optimization problem (\ref{eqn_ocp1})  is solved excluding the small neighborhood around the origin. The local optimal control is obtained based on the linearization of nonlinear dynamics at the origin.  From this local control we determine a local Lyapunov function $\bP$ and define a local density function as
\begin{eqnarray}
\rho_L(\bx)= \text{max}\{(\bx^T \bP \bx)^{-3}-\gamma, 0\}, \label{localdensity}
\end{eqnarray}
which is used for applying general control strategy. We combine the local and global controllers through the blending procedure as follows:

\begin{equation}
   \bar u(\bx)=\frac{\rho_L(\bx)}{\rho_L(\bx) +\rho(\bx)} u_\ell(\bx) + \frac{\rho(\bx)}{\rho_L(\bx) +\rho(\bx)} u(\bx).  \label{Blendingctrl}
\end{equation}
Notice that with blending control, we can get a smooth control on the whole workspace.

\subsection{Stochastic OCP Using Koopman Operator}
In this section, we show how the Koopman operator can be used to develop optimal control results. The results involving the Koopman operator can be viewed as a dual to P-F-based approach for SOCP discussed in the previous section. We show that the HJB equation can be viewed from the perspective of Koopman operator theory. The results from this section will be used to develop data-driven optimal control based on the approximation of the Koopman operator. Consider the following optimal control problem
\begin{eqnarray}
V^\star(\bx)=\inf_u \;\mathbb{E}_{\bx}\left[\int_0^\infty q(\bX_t^\bx)+ r u_t^2 \;dt\right]\nonumber\\
{\rm s.t.}\;\dot\bx={\bf f}(\bx)+{\bf g}(\bx)u+\sigma \bn(\bx)\xi\label{ocp_koopman}
\end{eqnarray}
With some slight abuse of notation, we are assuming that the stochastic control dynamical system in (\ref{ocp_koopman}) satisfies the same assumption as stated for Eq. (\ref{cont_syst1}).
We make following assumption.
\begin{assumption}\label{assume_ocpkoopman}
The state cost function is assumed to be radially unbounded  and positive i.e., $q(\bx)\to \infty$ as $\bx \to \infty$, $q(\bx)\geq 0$, $q(0)=0$, and $q\in {\cal C}^2(\mR^n)$. We assume that there exists a feedback control input $u=k(\bx)\in{\cal C}^4(\mR^n)$ for which the cost function is finite for any finite $\bx$ and the optimal control input is feedback in nature $u^\star =k^\star(\bx)\in {\cal C}^4(\mR^n)$.
\end{assumption}
The above assumption could be restrictive as it rules out systems that does not admit continuous control and hence optimal solution that are not continuous. It is known that the SOCP admits viscosity-based solution which is weaker than classical solution \cite{crandall1992user}. While the classical solution are differentiable in the domain of interest, the viscosity-based solution allows for a continuous function to be defined as a unique solution of HJB equation that do not admit continuous solution in a classical sense.
However, given the focus of this paper is on the use of operator theoretic framework for data-driven optimal control we proceed with this stronger assumption.
Following theorem is the main result expressing the solution to SOCP in terms of the Koopman generator.
\begin{theorem}\label{theorem_mainKoopman}
The solution to the optimal control problem (\ref{ocp_koopman}) satisfying Assumption \ref{assume_ocpkoopman} can be obtained by solving following HJB equation for $V^\star(\bx)\in {\cal C}^2(\mR^n,\mR_{\geq 0})$ expressed in terms of Koopman generator.

\begin{align}
{\cal A}_K^{{\bf f}_c} V^\star(\bx)=-q(\bx)-rk^\star(\bx)^2\label{hjb1}
\end{align}
with the optimal control $k^\star(\bx)$ given by 
\begin{align}
    k^\star(\bx)=-\frac{1}{2}r^{-1}\bg(\bx)\cdot \nabla V^\star(\bx)\label{hjb2}
\end{align}
with $V^\star(0)=0$. ${\cal A}_K^{{\bf f}_c}$ in (\ref{hjb1}) is the infinitesimal generator of the Koopman operator corresponding to the closed loop stochastic dynamics, $\dot \bx={\bf f}(\bx)+\bg(\bx)k(\bx)+\bn(\bx)\xi=: {\bf f}_c(\bx,\xi)$ (Definition \ref{definition_koopmangenerator}) and given by
\[{\cal A}_K^{{\bf f}_c}:=\left({\bf f}+\bg k\right)\cdot \nabla V^\star+\frac{\sigma^2}{2}\sum_{i,j=1}^n [\bn \bn^\top]_{ij} \frac{\partial^2 V^\star}{\partial x_i\partial x_j}\]
\end{theorem}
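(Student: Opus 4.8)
The plan is to combine the dynamic programming principle with Dynkin's formula, exploiting the fact that the Koopman generator ${\cal A}_K^{{\bf f}_c}$ of Definition \ref{definition_koopmangenerator} is precisely the infinitesimal generator of the controlled diffusion. First I would work with the value function $V^\star(\bx)=\inf_u \mE_\bx[\int_0^\infty q(\bX_t^\bx)+ru_t^2\,dt]$ and invoke Bellman's principle of optimality to obtain, for any small $h>0$, the dynamic programming principle
\begin{equation*}
V^\star(\bx)=\inf_u \mE_\bx\left[\int_0^h \left(q(\bX_s^\bx)+ru_s^2\right)ds + V^\star(\bX_h^\bx)\right].
\end{equation*}

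Next, since Assumption \ref{assume_ocpkoopman} guarantees $V^\star\in{\cal C}^2$, I would apply Dynkin's formula to the term $\mE_\bx[V^\star(\bX_h^\bx)]$. For a feedback control $u=k(\bx)$ the controlled process has infinitesimal generator
\begin{equation*}
{\cal A}_K^{u}V=({\bf f}+\bg u)\cdot\nabla V+\frac{\sigma^2}{2}\sum_{i,j=1}^n[\bn\bn^\top]_{ij}\frac{\partial^2 V}{\partial x_i\partial x_j},
\end{equation*}
which is exactly the Koopman generator associated with the corresponding closed-loop vector field. Dynkin's formula then yields $\mE_\bx[V^\star(\bX_h^\bx)]-V^\star(\bx)=\mE_\bx[\int_0^h {\cal A}_K^{u}V^\star(\bX_s^\bx)\,ds]$; substituting this into the dynamic programming principle, dividing by $h$, and letting $h\to 0$ produces the HJB equation in its minimized form
\begin{equation*}
0=\inf_u\left[q(\bx)+ru^2+{\cal A}_K^{u}V^\star(\bx)\right].
\end{equation*}

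The minimization over $u$ is then a routine pointwise calculation: the diffusion term of ${\cal A}_K^{u}V^\star$ is independent of $u$, so the only $u$-dependent contribution is the strictly convex quadratic $ru^2+u\,\bg\cdot\nabla V^\star$, whose unique minimizer is $k^\star(\bx)=-\tfrac12 r^{-1}\bg(\bx)\cdot\nabla V^\star(\bx)$, establishing (\ref{hjb2}). Since evaluating the generator at $u=k^\star$ turns ${\cal A}_K^{u}$ into the closed-loop generator ${\cal A}_K^{{\bf f}_c}$ by definition, the minimized HJB becomes $0=q(\bx)+r k^\star(\bx)^2+{\cal A}_K^{{\bf f}_c}V^\star(\bx)$, i.e.\ exactly (\ref{hjb1}); the boundary condition $V^\star(0)=0$ follows from $q(0)=0$, $k^\star(0)=0$, and the equilibrium assumption.

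The main obstacle I anticipate lies not in the algebra but in rigorously justifying the two analytic steps: (i) establishing the dynamic programming principle for this infinite-horizon problem, and (ii) validating Dynkin's formula, which requires the stochastic integral $\int_0^h (\nabla V^\star(\bX_s^\bx))^\top \sigma\bn(\bX_s^\bx)\,dw_s$ to be a genuine martingale rather than merely a local martingale, so that it vanishes in expectation. This control depends on the polynomial growth/integrability bounds implicit in the admissibility conditions together with Assumption \ref{assume_diff}, and on the radial unboundedness of $q$ to bound $V^\star$ at infinity. A complementary verification argument---confirming that the ${\cal C}^2$ solution of (\ref{hjb1})--(\ref{hjb2}) is indeed the value function and that $k^\star$ is admissible and optimal---would close the proof; under Assumption \ref{assume_ocpkoopman} this again reduces to an application of Dynkin's formula along the closed-loop trajectories.
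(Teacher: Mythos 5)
Your proposal is correct in outline, but it follows the classical dynamic-programming route rather than the route the paper actually takes; the paper never invokes the dynamic programming principle or Dynkin's formula. Instead, it fixes an admissible feedback policy $u=k(\bx)$, uses Fubini and the definition of the Koopman semigroup to represent the cost as $V(\bx)=\int_0^\infty[\mU_t^c\varphi](\bx)\,dt$ with $\varphi:=q+rk^2$, invokes Barbalat's lemma (using finiteness of $V$ and uniform continuity of $t\mapsto[\mU_t^c\varphi](\bx)$) to conclude $\lim_{t\to\infty}[\mU_t^c\varphi](\bx)=0$, and then differentiates this integral representation through the generator to obtain the linear policy-evaluation equation ${\cal A}_K^{{\bf f}_c}V=-\varphi$; only afterwards does it minimize over $k$, by formally differentiating that equation with respect to $k$ and setting the derivative to zero, which yields the same $k^\star(\bx)=-\frac{1}{2}r^{-1}\bg(\bx)\cdot\nabla V^\star(\bx)$ you obtain. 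The two routes buy different things. Your DPP/Dynkin argument produces the HJB in its minimized form $0=\inf_u\left[q(\bx)+ru^2+{\cal A}_K^{u}V^\star(\bx)\right]$ directly, so optimality of $k^\star$ among all admissible controls (not merely feedback ones) is cleaner, at the price of the technical obligations you correctly flag: justifying the infinite-horizon DPP and upgrading the stochastic integral from local martingale to true martingale. The paper's argument stays entirely inside the operator-theoretic framework: the intermediate linear equation ${\cal A}_K^{{\bf f}_c}V=-\varphi$ is precisely the policy-evaluation step (\ref{step1}) that drives the Koopman policy-iteration algorithm later in the paper, which is the purpose of the formulation; on the other hand, it leans on the assumed feedback form of the optimum (Assumption \ref{assume_ocpkoopman}), and its optimization step (differentiating the PDE in $k$ while treating $V$ as fixed) is formal and, strictly speaking, no more rigorous than your pointwise minimization inside the infimum.
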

\begin{proof}
Moving the expectation inside the time integral we can write
\begin{eqnarray}
\mathbb{E}_{\bx}\left[\int_0^\infty q(\bX_t^\bx)+ r u_t^2 \;dt\right]=\int_0^\infty \mathbb{E}_{\bx}\left[ q(\bX_t^\bx)+ r u_t^2 \;dt\right]
\end{eqnarray}
Using the assumption that optimal control is feedback in nature and the definition of Koopman semi-group, we write
\begin{eqnarray}
V(\bx)=\int_0^\infty \mathbb{E}_{\bx}\left[ q(\bX_t^\bx)+ r u_t^2 \;dt\right]=\int_0^\infty [\mU_t^c \varphi](\bx)dt\label{integral_koopman}
\end{eqnarray}
where $\varphi:=q+r k^2$. Following Assumption \ref{assume_ocpkoopman}, we know that the cost function, $V(\bx)$ is finite for any finite $\bx$. Furthermore, $[\mU_t^c \varphi](\bx)$ is uniformly continuous w.r.t. time. The uniform continuity follows from the definition of the Koopman operator and the solution of the feedback system satisfy uniform continuity w.r.t. time property. Hence, we can apply Barbalat Lemma \cite{barbalat1959systemes} to conclude that $\lim_{t\to \infty}[\mU_t^c \varphi](\bx)=0$. 
 It can be shown that $V(\bx)$ satisfy following equation
\begin{eqnarray}
{\cal A}_K^{{\bf f}_c} V(\bx)=-\varphi(\bx)\label{koopman_pde}
\end{eqnarray}
To prove this we substitute (\ref{integral_koopman}) in (\ref{koopman_pde}) to obtain
\begin{eqnarray}
{\cal A}_K^{{\bf f}_c} V(\bx)=\int_0^\infty({\bf f}+\bg k)\cdot  \nabla[\mU^c_t \varphi](\bx)dt\nonumber\\
+\int_0^\infty \frac{\sigma^2}{2}\sum_{i,j=1}^n [\bn \bn^\top]_{ij} \frac{\partial^2 }{\partial x_i\partial x_j}[\mU_t^c\varphi](\bx)\nonumber \\
=\int_0^\infty \frac{d}{dt}[\mU^c_t \varphi](\bx)dt\nonumber\\
=[\mU_t^c \varphi](\bx)|_{t=0}^\infty
=-\varphi(\bx)
\end{eqnarray}
where we have used the fact that $\lim_{t\to \infty}[\mU_t^c \varphi](\bx)=0$. Since the objective is to find a feedback control input that minimizes the cost function $V(\bx)$, differentiating (\ref{koopman_pde}) w.r.t. $k$ and equating it to zero gives us the optimal feedback input 
\begin{eqnarray}k(\bx)=-\frac{1}{2}r^{-1}\bg\cdot \nabla V(\bx)\label{optimal_inputKoopman}
\end{eqnarray}
Substituting (\ref{optimal_inputKoopman}) in (\ref{koopman_pde}) we obtain the desired equation for the optimal cost function $V^\star(\bx)$ satisfying the HJB equation (\ref{hjb1}). The optimal $k^\star$ has the form (\ref{optimal_inputKoopman}) with $V(\bx)$ replaced with optimal $V^\star(\bx)$. Furthermore,  $V^\star(\bx)$ and $V^\star(0)=0$ follows from the fact that $\varphi(\bx)\in {\cal C}^2(\bX)$ with $\varphi(0)=0$ and the integral formula for $V(\bx)$ in (\ref{integral_koopman}).

\end{proof}

\begin{remark}
The optimal cost function $V^\star(\bx)$ from Theorem \ref{theorem_mainKoopman} can be viewed as dual to the optimal density function $\rho^\star(\bx)$ from Theorem (\ref{Theorem_mainPF}). While $V^\star(\bx)$ is obtained by the time integral of the cost function involving Koopman operator $\rho^\star(\bx)$ is obtained as time integral of density function $h_0(\bx)$ using the P-F operator. The duality between $V^\star(\bx)$ and $\rho^\star(\bx)$ then follows using the duality between the Koopman and P-F operators. 
Furthermore, just like $\rho^\star(\bx)$ ensures a.e. a.s. stabilization of the equilibrium point, the optimal cost function $V^\star(\bx)$ also serves as stability certificate for the closed loop stochastic system. In particular, $V^\star(\bx)$ guarantee a.s. global asymptotic stability of the origin  as it satisfies following inequality
\[{\cal A}_K^{{\bf f}_c} V(\bx)<0\]
with $V^\star(\bx)>0$ for $\bx \neq 0$ and $V^\star(\bx)$ is radially unbounded. The radially unbounded property follows from the assumption on $q(\bx)$ (Assumption \ref{assume_costfunction}), Eq. (\ref{integral_koopman}), and positivity property of the Koopman operator \cite{Hasminskii_book,Kushner_book}. This duality between density function and the Lyapunov function as stability certificates for deterministic and stochastic systems is studied in \cite{Rantzer01,Vaidya_TAC,van2006almost}. 
\end{remark}

The final result of Theorem \ref{theorem_mainKoopman} is not new; however, the interpretation of the HJB equation in terms of the generator of the Koopman operator is novel.
This connection between the Koopman generator for the stochastic system and the HJB equation allows us to provide a novel approach for the data-driven numerical solution of HJB equation based on the  finite-dimensional approximation of the Koopman operator. 
Following Assumtion \ref{assume_ocpkoopman}, we rule out the possibility of approximating the viscosity-based solution of the HJB equation. 

For the purpose of computation, the complexity associated with the nonlinear nature of the HJB equation can be broken down using the popular approach employed in Reinforcement Learning theory. In particular, using ideas from RL's generalized policy iteration (GPI) algorithm, an iterative process can be provided for solving the HJB equation. The working of GPI as it applies to solving the HJB equation is evident if we split the HJB equation as two equations in (\ref{hjb1})-(\ref{hjb2}).


Consider the $k^{th}$ step of the iteration and let  $k_k(\bx)$ be the a.s. globally stabilizing feedback controller. Then the value function $V_{k}$ can be obtained as the solution of the following linear equation expressed in terms of the Koopman generator of the stochastic feedback system. 
\begin{align}
    {\cal A}_K^{{\bf f}+\bg k_k}V_k(\bx)=-q(\bx)-rk_k(\bx)^2\label{step1}
\end{align}
For a given fixed feedback controller $k_k$, the above equation is a linear equation to be solve for unknown $V_k$. Once we solve for $V_k$ the feedback controller can be updated to its new value as 
\begin{align}
k_{k+1}(\bx)=-\frac{1}{2}r^{-1}\bg(\bx)\cdot \nabla V_k=-\frac{1}{2}r^{-1}{\cal A}_K^{\bg} V_k\label{step2}
\end{align}

The update step in Eq. (\ref{step1}) going from $k_k(\bx)\to V_k(\bx)$ corresponds to the policy evaluation step  whereas the step of going from $V_k(\bx)\to k_{k+1}(\bx)$ in (\ref{step2}) correspond to policy improvement step of GPI \cite{sutton2018reinforcement}. However, unlike GPI algorithm used in the execution these steps where the value function or cost function, $V_\ell$, is paramaterized and approximated from time-series data \cite{bertsekas2011approximate,sutton2018reinforcement}, our proposed computational scheme will rely on using time- series data for the approximation of the Koopman generator, ${\cal A}_K^{{\bf f}+\bg k_k}$ and ${\cal A}_K^{\bg}$ for the approximation of $V_k$ and $k_k$.  Note that the convergence analysis of the policy iteration is addressed for deterministic systems in  \cite{beard1997galerkin} and stochastic setting  \cite{santos2004convergence,bertsekas2015value}. We refer the readers to survey article on approximate policy iteration in \cite{bertsekas2011approximate} involving approximation of value function.

The spectrum of the Koopman operator carries essential information about the system dynamics. Our proposed Koopman-based perspective to the HJB equation will enable the discovery of a spectral-based method for optimal control. 


\section{Data-driven Approximation of Optimal Control}\label{section_data-driven}
This section will discuss a linear operator-based numerical scheme for the data-driven approximation of optimal control. The numerical scheme for data-driven optimal control will rely on the P-F-based convex formulation to the OCP as discussed in (\ref{eqn_ocp1}) and Koopman-based value iteration as given in Eqs. (\ref{step1})-(\ref{step2}). We start with the finite-dimensional approximation of the P-F and Koopman operators for stochastic control systems and then utilize these approximations to design optimal control and stabilizing feedback control. 

\subsection{Approximation of P-F Operator}
 For the finite dimensional approximation of the P-F based convex formulation of the OCP problem in Eq. (\ref{eqn_ocp1}), we need to approximate the P-F generator corresponding to dynamical systems 
$\dot \bx={\bf f}(\bx)+\sigma \bn(\bx)\xi$ and $\dot \bx=\bg(\bx)$ using the time series data generated from controlled stochastic system $\dot \bx={\bf f}(\bx)+\bg(\bx)u+\sigma \bn(\bx)\xi$.



For the approximation of P-F generator for $\dot \bx={\bf f}(\bx)+\sigma\bn(\bx)\xi$ we use control input $u=0$. 
Let $\{\bx_i,\by_i^\ell,u_i=0\}$ be the two consecutive snapshots obtained by simulating the system (\ref{sys}) using Euler-Maruyama discretization \cite{kloeden2013numerical}. In particular, we have 
\begin{equation}
\mathbf{y}_i^\ell=\bx_i + \Delta t{\bf f}(\bx_i) + \sigma\sqrt{\Delta t}\bn(\bx_i)\xi^\ell
\end{equation}
where $i=1,\ldots, N$ are the number of initial conditions assume to be uniformly distributed in the state space and $\ell=1,\ldots, R$ are the number of realization. For the finite dimensional approximation, let $\bPsi: \bX\to \mR^K$ be the vector valued  basis functions. We make following assumption on the choice of basis functions. 
\begin{assumption}\label{assume_basis}
We assume that the basis functions, $\psi_k(\bx)\in {\cal L}_2(\mR^n)$ for $k=1,\ldots, K$ are non-negative and linearly independent. We denote the set of basis functions as 
\[\bPsi(\bx)=[\psi_1(\bx),\ldots,\psi_K(\bx)]^\top.\]
\end{assumption}
Our objective is to construct the projection of the infinite-dimensional P-F operator on the finite-dimensional subspace spanned by $\{\psi_k(\bx)\}_{k=1}^K$. The projection should preserve the positivity and Markov properties of the P-F operator. 
Writing 
\[\phi(\bx)=\bPsi(\bx)^\top {\bf a},\;\;\;\hat \phi(\bx)=\bPsi(\bx)^\top \hat{\bf a}\]
The action of the Koopman operator (\ref{Koopman_operator}) on the finite dimensional basis function can be written as  
\begin{eqnarray}
\hat \phi(\bx_i)=\bPsi^\top (\bx_i)\hat {\bf a}=[\mU_t \phi](\bx_i)+r\nonumber\\=\mE_\bx[\phi(\bX_{\Delta t}^{\bx_i})]+r
\approx \frac{1}{R}\sum_{\ell=1}^R \bPsi^\top(\by_i^\ell)^\top {\bf a}+r\label{ss}
\end{eqnarray}
where $r$ is the residual term and the objective is to minimize the residual term. We have approximated the expectation above using multiple, $R$, realization of the system trajectories. 
For the approximation of the P-F operator preserving the positivity and Markov property, Naturally Structured Dynamic Mode Decomposition (NSDMD) algorithm is proposed in \cite{huang2018data}. The NSDMD algorithm can be viewed as the generaization of the popular Extended Dynamic Mode Decomposition (EDMD) algorithm to incorporate the positivity and Markov constraints.  Pre-multiplying (\ref{ss}) with $\bPsi(\bx_i)$ and summing over the initial conditions and the different realization to construct following matrices
\begin{eqnarray}
\bG:=\frac{1}{M}\sum_{i=1}^N \bPsi(\bx_i)\bPsi^\top (\bx_i)\nonumber\\\bA:=\frac{1}{MR}\sum_{i,\ell}^{N,R}\bPsi(\bx_i)\bPsi^\top (\by_i^\ell).\label{GA_compute}
\end{eqnarray}
The $\bG$ and $\bA$ matrices are used to formulate a least square problem for the minimization of residual term $r$ as follows: 
\begin{eqnarray}\label{nsdmd_algo}&\min_{\bK_0\in \mathbb{R}^{K\times K}}\parallel \bG \bK_0-\bA\parallel_F\\
\text{s.t.} \;\;
& [{\bf{\Lambda} {\bf K}_0\bf{\Lambda}^{-1}}]_{ij}\geq 0,\;\;\;\bf{\Lambda}{\bf K}_0\bf{\Lambda}^{-1}\mathds{1} = \mathds{1}\label{nsdmd2}
\end{eqnarray}
 where $\mathds{1}$ is a vector of all ones and ${\bf \Lambda}=\int \bPsi(\bx)\bPsi^\top(\bx)d\bx$ Note that the $\bf \Lambda$ matrix can be computed analytically as integral can be computed explicitly for Gaussian basis functions.  The constraints are imposed to ensure  the positivity and Markov property of the linear operator. Using the duality relation between the P-F and Koopman operators (\ref{duality_operator}), the P-F operator can be obtained as 
 \begin{eqnarray}\mP_{\Delta t}\approx\bP_0=\Lambda^{-1}\bK_0^\top \Lambda \label{P_0approx}
\end{eqnarray}
  In practice, we implement  following numerical efficient optimization problem which is equivalent to (\ref{nsdmd_algo})-(\ref{nsdmd2}) to compute the P-F operator. \begin{eqnarray}\label{nsdmd}
&\min\limits_{\hat{\bf P}_0}\parallel \hat{\bf G}{\hat{\bf P}_0}-\hat{\bf A}\parallel_F\\\nonumber
\text{s.t.} \;\;
& [\hat{\bf P}_0]_{ij}\geq 0,\;\;\;\hat {\bf P}_0\mathds{1} = \mathds{1},
\end{eqnarray}
where,
\begin{eqnarray}\hat{\bf G}={\bf G}{\bf \Lambda}^{-1},\;\;\hat{\bf A}={\bf A}{\bf \Lambda}^{-1}\label{hatGA},
\end{eqnarray}
where $\bf G$ and $\bf A$ are as defined in (\ref{GA_compute}). The P-F operator is obtained from $\hat \bP$ as 
\begin{eqnarray}
\bP_0=\hat \bP_0^\top.
\end{eqnarray}
Similarly, the infinitesimal generator for the P-F operator is then approximated as 
\begin{eqnarray}
{\cal A}^{{\bf f}+\sigma \bn}_{PF}\approx \frac{\bP_0-I}{\Delta t}=: \bM_0.\label{P_genapprox}
\end{eqnarray}

\begin{remark}\label{remark_edmd}
The EDMD algorithm will arise as a special case of the NSDMD algorithm (\ref{nsdmd_algo})-(\ref{nsdmd2}) where the constraints (\ref{nsdmd2}) in the optimization problem are removed. Similarly, dynamic mode decomposition (DMD) will arise as the special case of the EDMD when the basis function $\bPsi(\bx)$ are chosen to be identity basis functions i.e., $\bPsi(\bx)=\bx$. 
\end{remark}
The problem of convergence of EDMD algorithm for deterministic and stochastic systems is studied in \cite{korda2018convergence,vcrnjaric2020koopman}. The convergence results are asymptotic and guarantee the convergence of the infinite-dimensional Koopman operator in the limit as the data samples, and the number of basis go to infinity. These results will apply to the NSDMD algorithm as well as these operators are naturally positive and preserve Markov property.

For the approximation  of the P-F generator corresponding to control vector field i.e., $\dot \bx=\bg(\bx)$ we use control data set. Let $\{\bx_i,\by_i^\ell,\bu_i\}$ be the control data set obtained by simulating the control system (\ref{cont_syst1}) from different initial condition for $i=1,\ldots, N$ and $\ell=1,\ldots, R$ and $\bu_i$ is assumed to be the step input. In particular, we have 
\begin{equation}
\mathbf{y}_i^\ell=\bx_i + \Delta t{\bf f}(\bx_i) + \sigma\sqrt{\Delta t}\bn(\bx_i)\xi^\ell+\Delta t \bg(\bx_i)
\end{equation}
With step input this data essentially correspond to be generated by stochastic system $\dot \bx={\bf f}(\bx)+\sigma \bn(\bx)\xi+\bg(\bx)$.
Using exactly the same procedure outlined in previous section we construct the approximation of P-F generator  corresponding to the above dynamical system. Following the notation convention from (\ref{P_genapprox}), let 
\begin{eqnarray}
{\cal A}_{PF}^{{\bf f}+\sigma {\bn}+\bg}\approx \bM_1.\label{m1eqn}
\end{eqnarray}
We now use linearity property of the P-F generator to extract the P-F generator corresponding to the dynamical system $\dot \bx=\bg(\bx)$. In particular, we have 
\begin{eqnarray}
{\cal A}_{PF}^{\bg}\rho&=&-\nabla \cdot (\bg \rho)\nonumber\\&=&\left[-\nabla\cdot (({\bf f}+\bg)\rho)+\frac{\sigma^2}{2}\sum_{i,j=1}^n \frac{\partial^2[(\bn \bn^\top)_{ij}\rho]}{\partial \bx_i\partial \bx_j}\right]\nonumber\\&-&\left[-\nabla\cdot ({\bf f}\rho )+\frac{\sigma^2}{2}\sum_{i,j=1}^n \frac{\partial^2[(\bn \bn^\top)_{ij}\rho]}{\partial \bx_i\partial \bx_j}\right]\nonumber\\&=&
\left[{\cal A}_{PF}^{{\bf f}+\sigma {\bn}+\bg}-{\cal A}_{PF}^{{\bf f}+\sigma {\bn}}\right]\rho.
\end{eqnarray}
Hence, the P-F generator for $\dot \bx=\bg(\bx)$ is  approximated as

\begin{eqnarray}
{\cal A}_{PF}^{\bg}\approx \bM_1-\bM_0.\label{approx_generatorG}
\end{eqnarray}
The finite-dimensional approximation of the generator corresponding to the control vector field and uncontrolled vector field, namely $\bM_0$ and $\bM_1$, will be used to approximate the optimization problem. We  discuss this in subsection \ref{section_PFoptimalcontrolfinite}. 

\subsection{Approximation of the Koopman operator}
 For the finite dimensional approximation of the Koopman-based policy interation algorithm in Eqs. (\ref{step1})-(\ref{step2}), we need to approximate the Koopman generator corresponding to the control vector field $\dot \bx=\bg(\bx)$ and uncontrolled vector field $\dot\bx={\bf f}(\bx)+\sigma \bn(\bx)\xi$ using the time-series data generated by $\dot \bx={\bf f}(\bx)+\bg(\bx)u+\sigma \bn(\bx)\xi$. The procedure for this approximation follows exactly along the lines of approximation procedure for the P-F operator as discussed in the previous subsection. The only difference is that we used EDMD algorithm for the approximation (refer to Remark \ref{remark_edmd}). Following (\ref{P_genapprox}) and (\ref{m1eqn}), let $\bK_0,\bL_0$ and $\bK_1,\bL_1$ be the approximation of the Koopman operators and generators corresponding to the uncontrolled, $\dot\bx={\bf f}(\bx)+\sigma \bn(\bx)\xi$ vector field and control vector field with unit step input, $\dot\bx={\bf f}(\bx)+\bg(\bx)\sigma +\bn(\bx)\xi$ respectively i.e., 
\begin{eqnarray}
{\cal A}_K^{{\bf f}+\sigma\bn} \approx \frac{\bK_0-I}{\Delta t}=:\bL_0,\;\;{\cal A}_K^{{\bf f}+\bg +\sigma\bn} \approx \frac{\bK_1-I}{\Delta t}=:\bL_1
\end{eqnarray}

Again using the linearity property of the Koopman generator w.r.t. the vector field, we obtained following approximation for the Koopman generator corresponding the control vector field
\begin{eqnarray}
{\cal A}_K^\bg \approx \bL_1-\bL_0.\label{approximatedg}
\end{eqnarray}
Similarly, time series data from the feedback control system $\dot \bx={\bf f}(\bx)+\bg(\bx)k_k(\bx)+\sigma \bn(\bx)\xi$ is used for the approximation of the Koopman operator and generator at the $k^{th}$ step of the policy iteration. We denote by $\bK_c^k$ and $\bL_c^k$ the Koopman operator and generator for the feedback control system
\begin{eqnarray}
{\cal A}_K^{{\bf f}+\bg(\bx)k_k(\bx)+\sigma \bn(\bx)}\approx \frac{\bK_c^k-I}{\Delta t}=: \bL_c^k\label{finiteKoopmanVI}
\end{eqnarray}

\begin{remark}
In the above construction, we identified the P-F and Koopman generators for the drift vector field and the control vector field using zero and step input. However, it is possible to used arbitrary control inputs for the identification of the two generators. The identification process will correspond to the bilinear lifting of control dynamical system. 
\end{remark}

\subsection{Data-driven Optimal Control: P-F-based Convex Approach}\label{section_PFoptimalcontrolfinite} Let $ \rho(\bx)$, $\bar \rho(\bx)$, and $h_0$ be expressed in terms of the Gaussian RBF as
\begin{eqnarray}
\rho(\bx)\approx \bPsi^\top{\bf v},\;\bar\rho(\bx)\approx \bPsi^\top {\bf w},\;h(\bx)=\bPsi^\top {\bf m}\label{approx}
\end{eqnarray}
With the above representation we have following approximation
\[\nabla\cdot ({\bf f}\rho )-\frac{\sigma^2}{2}\sum_{i,j=1}^n \frac{\partial^2[(\bn \bn^\top)_{ij}\rho]}{\partial \bx_i\partial \bx_j}\approx -\Psi^\top(\bx) \bM_0 {\bf v}\]\[\nabla\cdot(\bg\bar \brho)\approx -\Psi^\top (\bx)(\bM_1-\bM_0){\bf w}\]
We now proceed with the approximation of the cost function. 
\[\int_{\bS}q(\bx)\rho(\bx)d\bx\approx \int_\bX q(\bx)\bPsi^\top d\bx {\bf v}={\bf d}^\top{\bf v}\]
where the vector ${\bf d}:=\int_X q(\bx)\bPsi d\bx$ can be pre-computed. For the approximation of the control cost we make following assumption.
The infinite dimensional optimization problem in terms of the basis function is then written as 

\begin{eqnarray*}
&\min_{\bPsi^\top{\bf v}\geq 0, {\bf w},\kappa>0} {\bf d}^\top {\bf v}+  r  \frac{{\bf w}^\top {\bf D}{\bf w}}{{\kappa}}\\
&{\rm s.t.}\;-\bPsi(\bx)^\top \left(\bM_0 {\bf v}+(\bM_1-\bM_0){\bf w}\right)=\bPsi(\bx)^\top {\bf m}\\
&\bPsi^\top (\bx)\bv\leq \kappa
\end{eqnarray*}
where ${\bf D}:=\int_\bS \bPsi\bPsi^\top d\bx$. Since the basis function $\bPsi$ is known, the matrix $\bf D$ can be evaluated explicitly or computed from data as ${\bf D}\approx \frac{1}{N}\sum_k \bPsi(\bx_k)\bPsi(\bx_k)^\top$ Now using the fact that the basis functions are non-negative and are linearly independent with $s=\|\psi_k\|_\infty$ for $k=1,\ldots, K$, we write the finite dimensional approximation of the optimization problem as 

\begin{eqnarray}
&\min_{{\bf v}\geq 0, {\bf w},\kappa>0} {\bf d}^\top {\bf v}+  \frac{{\bf w}^\top {\bf D}{\bf w}}{\kappa}\nonumber\\
&{\rm s.t.}\; -\left(\bM_0 {\bf v}+ \bM_1{\bf w}\right)= {\bf m},\;\;s \mathds{1}^\top \bv\leq \kappa\label{cvxopt_1}
\end{eqnarray}
where $\mathds{1}$ is a vector of all ones.
The optimal control is then approximated as $u=\frac{\bPsi^\top{\bf w}}{\bPsi^\top{\bf v}}$. 

\subsection{Data-driven Optimal Control: Koopman-based Policy Iteration}\label{Data_KPI}
For the data-driven Koopman-based policy iteration algorithm we proceed as follows. Let 
\[\bPhi(\bx)=[\phi_1(\bx),\ldots,\phi_K(\bx)]^\top\]
be the basis function assumed to be linearly independent. Unlike P-F-based computation, we do not require $\phi_k$ to be non-negative. For the Koopman-based policy iteration we use polynomial basis functions. 
Let, $q(\bx), k_k(\bx)$, and $V_k(\bx)$ be the state cost function, the feedback control input, and value function at the $k^{th}$ step of the value iteration and are approximated as follows:

\begin{equation}
  \begin{aligned}
(q(\bx)+rk_k^2(\bx))\approx \bb^\top\bPhi(\bx),\;\;\;
V_k(\bx)\approx \bW_k^\top \bPhi(\bx). 
  \end{aligned}
 \label{approx_Koopman}
\end{equation}
The approximation of $q+rk_k^2$ is obtained by solving a least square problem. Let $\{\bx_t\}_{t=0}^N$ be the fixed data set uniformly distributed over the state space. Let \[\bar\bPhi=[\bPhi(\bx_0),\ldots, \bPhi(\bx_N)]^\top,\;\; \bar\bq=[q(\bx_0),q(\bx_1),\ldots, q(\bx_N)]^\top\] and $\bar\bk=[k_k^2(\bx_0),\ldots, k_k^2(\bx_N)]^\top$. Following least square problem is solve to determine $\bb$,
\begin{align}
    \min_{\bb}\|\bar \bPhi \bb-(\bar \bq+r\bar \bk)\|
\end{align}
which admits analytical solution as follows.
\[\bb=\bar \bPhi^\dagger (\bar \bq+r\bar \bk)\]
where $\dagger$ denotes pseudo-inverse. 
Using (\ref{finiteKoopmanVI}) and (\ref{approx_Koopman}), we write the finite dimensional approximation of (\ref{step1}) as 

\begin{equation}
     \bPhi(\bx)^\top\bL_c^k \bW_k = -\bPhi(\bx)^\top\bb  .
\end{equation}
Since the $\bPhi$ are basis functions assumed to be independent, (\ref{step1}) can be written as 
\[\bL_c^k \bW_k =-\bb.\]
Using (\ref{finiteKoopmanVI}) the above equation can be written in terms of the Koopman operator as
\begin{align}
   \left( \frac{I-\bK_c^k}{\Delta t}\right)\bW_k=-\bb. \label{W_valuefuention}
\end{align}
There are several ways to solve for the coefficient vector $\bW_k$. In particular, $\bW_k$ can be obtained as 
\[\bW_k=\Delta t(I-\bK_c^k)^{-1}\bb=\sum_{\ell=0}^\infty \Delta t \left[\bK_c^k\right]^\ell \bb\]
where $\left[\bK_c^k\right]^\ell$ stands for the $\ell^{th}$ power of Koopman operator at the $\bK_c^k$ obtained at the $k^{th}$ step of the policy iteration. Alternatively, $\bW_k$ can also be obtained from the least square solution. For the above formula to work we require that spectrum of the Koopman operator is strictly less than one. This is equivalent to stability condition for the feedback control system expressed in terms of the linear Koopman operator  \cite{Vaidya_TAC}. However, from numerical standpoint it is desirable to compute $\bW_k$ in terms of the finite power series of $\bK_c^k$ i.e.,
\begin{align}
\bW_k=\Delta t\left(I+\bK_c^k+\left[\bK_c^k\right]^2+\ldots+\left[\bK_c^k\right]^M\right)\bb
\end{align}
for some finite $M$ which forms an another tuning parameter and could be problem specific. In this paper,  we choose $M=15n$ where $n$ is the dimension of the state space.
Using (\ref{approximatedg}) the approximation to the policy update Eq. (\ref{step2}) for  can be written as

\begin{equation}
k_{k+1}(\bx)=-\frac{r^{-1}}{2}\bPhi(\bx)^\top (\bL_1-\bL_0) \bW_k   .\label{Koopman_policy}
\end{equation}

\begin{remark}
There could be different variants of the above described basic algorithm for the realization of Koopman-based policy iteration algorithm. For example in the above described algorithm we are identifying the Koopman generator for the closed loop system $\dot \bx={\bf f}(\bx)+\bg(\bx)k_\ell(\bx)+\sigma\bn(\bx)\xi$ at the $\ell^{th}$ step of the policy iteration. Instead one could use the linearity of the generator w.r.t. vector field to identify the generator corresponding to only part of vector field that is changing with the iteration i.e., $\bg(\bx)k_\ell(\bx)$.  
\end{remark}




  
  
  

 \subsection{Data-driven Design of Local Stabilizing Control}\label{local_control_data}
 For the  design of local optimal control we employ DMD algorithm for the identification of local system dynamics in the form of $\bA$ and $\bf b$ matrices. In particular, zero and step input data are used for the simultaneous identification of the $\bA$ and ${\bf b}$ matrices as the solution of following optimization problem:
\begin{eqnarray}
\min_{\bA,\bB}\parallel \bY - \bA\bX-{\bf b}{\bf \mathds{1}} \parallel_F\label{dmdalgo}
\end{eqnarray}
where, $\bX=[\bx_0,\bx_1,\ldots, \bx_t,\ldots,\bx_N]$ and $\bY=[\bx_1,\bx_2,\ldots, \bx_{t+1},\ldots,\bx_{N+1}]$ are the collection of time series data generated with step input $u=1$ with $\mathds{1}=[1,\ldots,1]$. These $\bA$ and $\bB$ matrices are employed in the design of local optimal controller in discrete-time setting using MATLAB command $\textit{lqrd}$. The cost function is assumed to be of the form $\bx^\top{\bf Q} \bx+ru^2$, where ${\bf Q}=\frac{\partial^2 q(0)}{\partial \bx^2}$ (assuming the origin to be the equilibrium point). The local optimal controller is active only within the small neighborhood of the equilibrium point.

\subsection{Model-based Approximation of Stochastic Optimal Control}
The model-based approximation of optimal control will be simpler form of the proposed data-driven approximation. In particular, for the model-based approximation we assume that we have access to the system dynamics. Hence, time-series data from the system $\dot \bx={\bf f}(\bx)+\sigma\bn(\bx)\xi$ and control vector field $\dot \bx ={\bf g}(\bx)$ can be used to obtained the approximation of the generators directly instead of using (\ref{approx_generatorG}) and (\ref{approximatedg}) to derive the generators for control vector field.

\subsection{Stochastic Stabilization}
The data-driven stochastic stabilization will arise as the special case of the stochastic optimal control problem. In particular, following \cite{van2006almost}, the condition for feedback stabilization can be written as 
\begin{equation}\label{eq:dualLyapcontrol}
		\nabla \cdot ({\bf f}\rho+ {\bf g} \bar\brho))-\frac{\sigma^2}{2}\sum_{i,j=1}^n \frac{\partial^2[(\bn \bn^\top)_{ij}\rho]}{\partial \bx_i\partial \bx_j}  >0
	\end{equation}
We have introduced new variable $\bar \brho=\bk \rho$. Above inequality is linear in terms of variables $(\rho,\bar \brho)$ and is solved for these variables. The control input $\bk(\bx)$ is then obtained as $\bk(\bx)=\frac{\bar\brho(\bx)}{\rho(\bx)}$. Following discussion from the previous section, we notice that the finite-dimensional approximation of the stabilization constraints can be written as following feasibility problem 
\begin{eqnarray}
-\left(\bM_0 {\bf v}+ \bM_1{\bf w}\right)>0,\;\;\;{\bf v}\geq 0.\label{feasibility}
\end{eqnarray}
where the positivity is component-wise. 
The stabilizing controller is then recovered as $u(\bx)= \frac{\bPsi^\top{\bf w}}{\bPsi^\top{\bf v}}$. 

\section{Simulation results}\label{secction_simulation}

This section presents numerical examples using the P-F-based convex approach and Koopman-based policy iteration method for optimal control. All the simulations codes are developed on  MATLAB  and ran on a computer consisting of 64 GB of RAM and a 3.8 GHz  Intel Core i7  processor. For the finite-dimensional approximation of the P-F operator, we used the Gaussian radial basis function. All the basis functions are assumed to be uniformly distributed in the domain, and the following thump rule is used in selecting $\bar\sigma$ for the Gaussian RBF.  Let $d$ be the distance between the centers of the Gaussian RBF then $\bar\sigma$ is chosen such that $d\leq 3\bar\sigma \leq 1.5d$. We used polynomial basis functions to approximate the Koopman operator in the Koopman-based policy iteration method. For all the example systems, we assumed quadratic cost for the states i.e., $q(\bx)=\bx^\top \bx$ and control i.e., $r u^2$.

\subsection{Stochastic Optimal Control using P-F Operator}
\subsubsection{Scalar Stochastic Example}
\begin{eqnarray}\label{scalar_sys2}
\dot{x}=ax^3 + \sigma x \xi + u
\end{eqnarray}
where $a = 0.01$, $u\in\mathbb{R}$ is an input, and the stochastic noise term has a $\sigma=0.5$. This example is chosen for comparison. For this example, the optimal control can be determined analytically when $\sigma=0$ (i.e., no noise case). We compare the results obtained using our proposed P-F-based approach with the analytically derived control in the deterministic setting. We expect the true stochastic control will be close to the deterministic control, especially for the small value of $\sigma$. The comparison between the analytically derived optimal control for the deterministic system and the control obtained using our proposed P-F-based approach is shown in Fig. \ref{fig:vdp_2}.

For the approximation of stochastic P-F operator, we applied NSDMD algorithm using one-step time-series data with $5$e$^3$ initial conditions and $2$e$^3$ realizations with a sample time of $\Delta t= 0.01$ (i.e., $2^4$ time-series data samples for the two operators). We use $75$ Gaussian RBF as the basis functions $\boldsymbol \Psi(\bx)$, with the radius $\bar\sigma = 0.3443$ and centers distributed uniformly within the range of $[-10,\; 10]$.  

In Fig. \ref{fig:vdp_1},  we show trajectories comparison obtained by applying the analytically derived optimal control with deterministic system dynamics, i.e., $u=-ax^{3}+ x\sqrt{a^2x^4+r^{-1}}$ but applied to the stochastic system (\ref{scalar_sys})  and the data-driven stochastic optimal control obtained using our proposed approach. The trajectories starting from 10 random initial conditions in the domain $[-10,10]$ are shown. The straight magenta  lines define the active region of the local control  parameter $\gamma=2$e$7$ (refer to Eq. \ref{localdensity} for the definition of $\gamma$).   In Fig. \ref{fig:vdp_2}, the analytical and data-driven feedback control $u(x)$ are presented. We can see the close matching between the trajectories as well as the feedback control values.

\begin{figure}[htbp]
\centering
\includegraphics[width=3.0in]{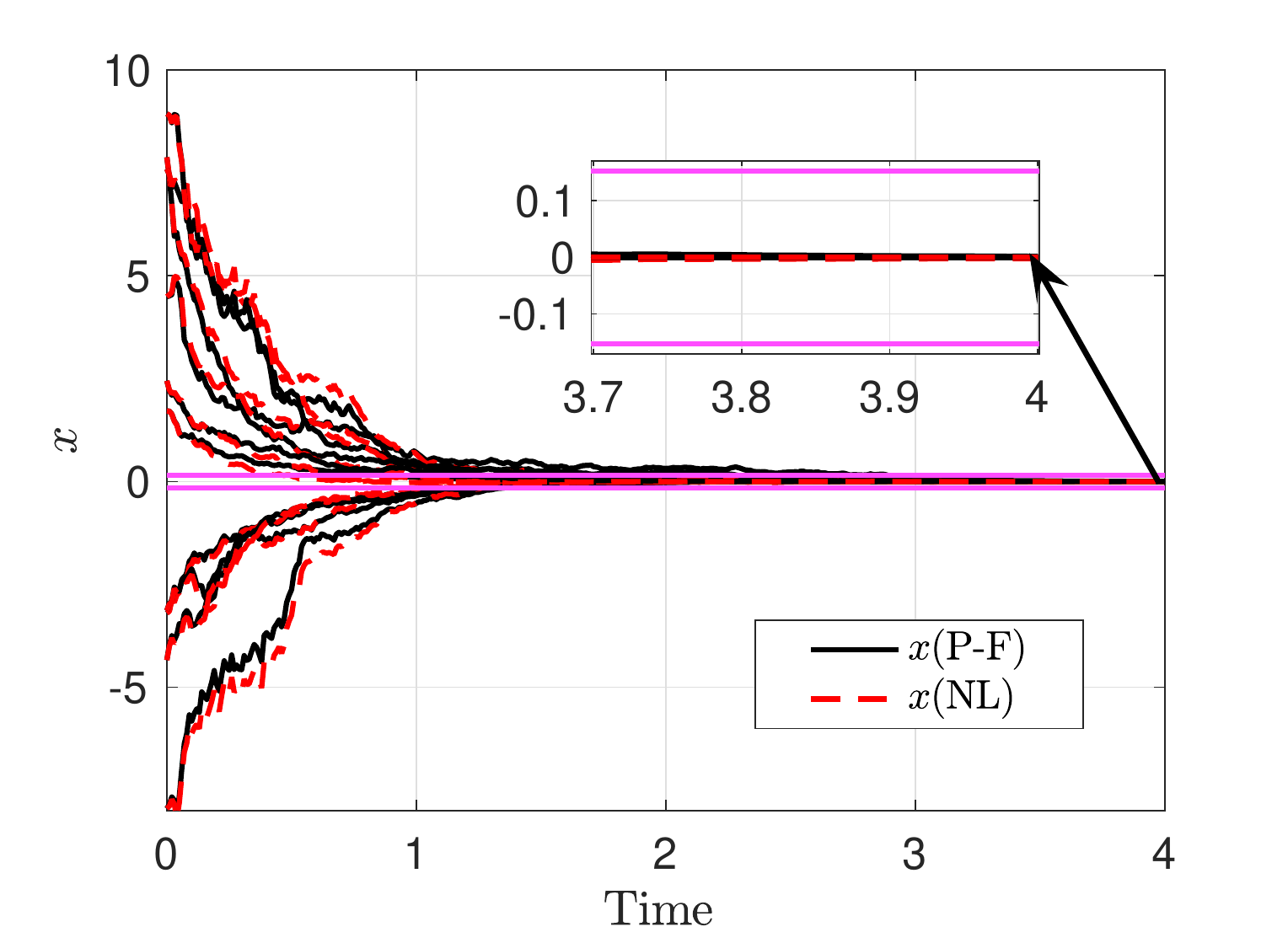}
\caption{Close-loop trajectories comparison using the analytical derived optimal control and P-F-based data-driven stochastic optimal control.}
\label{fig:vdp_1}
\end{figure}
\begin{figure}[htbp]
\centering
\includegraphics[width=3.0in]{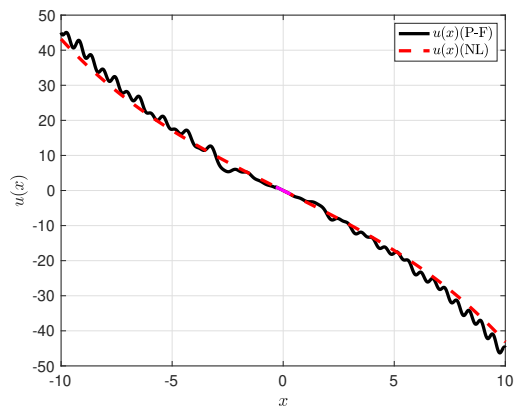}
\caption{Comparison of the feedback control using the analytically derived optimal control and P-F-based data-driven feedback control.}
\label{fig:vdp_2}
\end{figure}

\begin{figure}[htbp]
\centering
\includegraphics[width=3.0in]{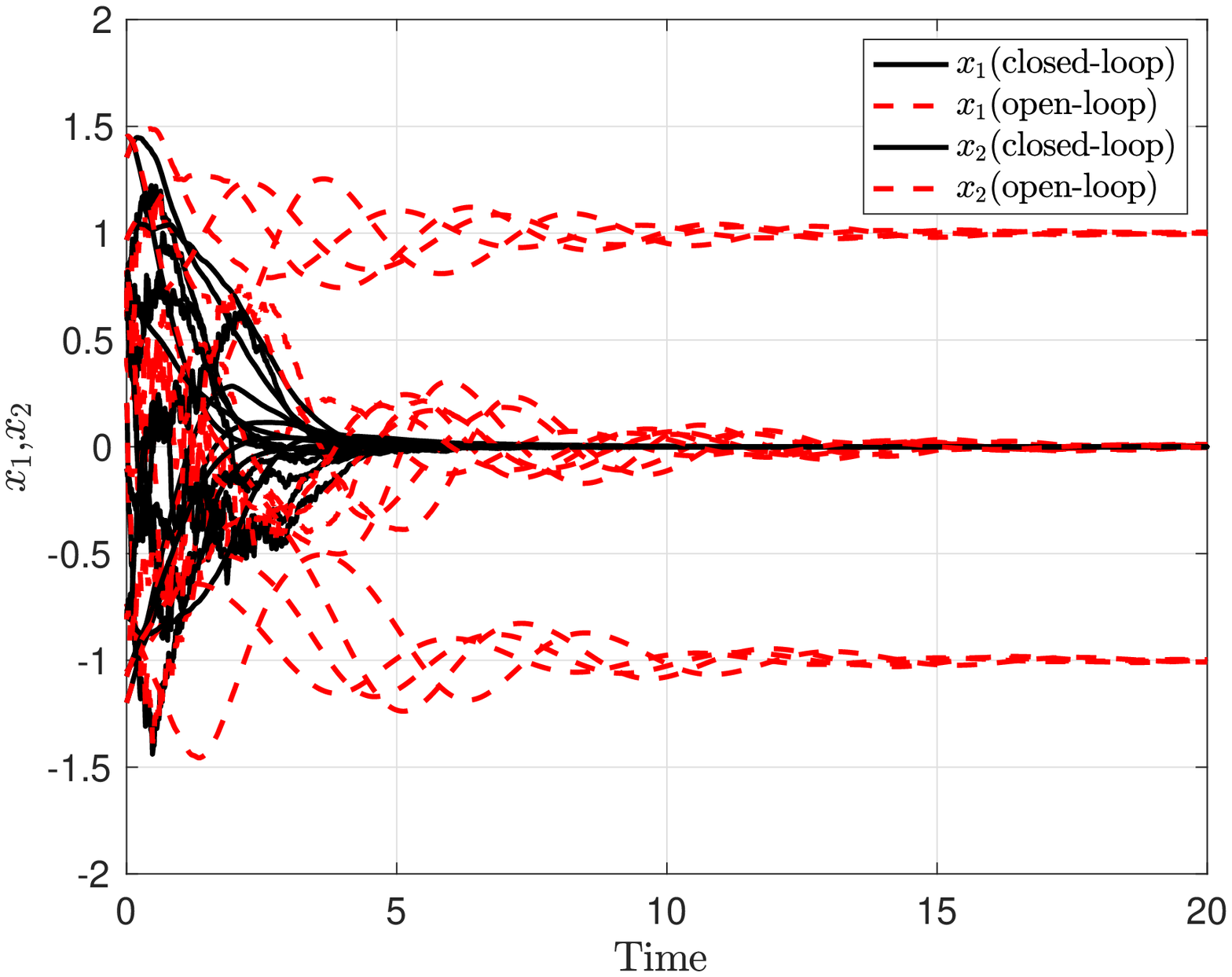}
\hspace{0.1cm}
\includegraphics[width=3.0in]{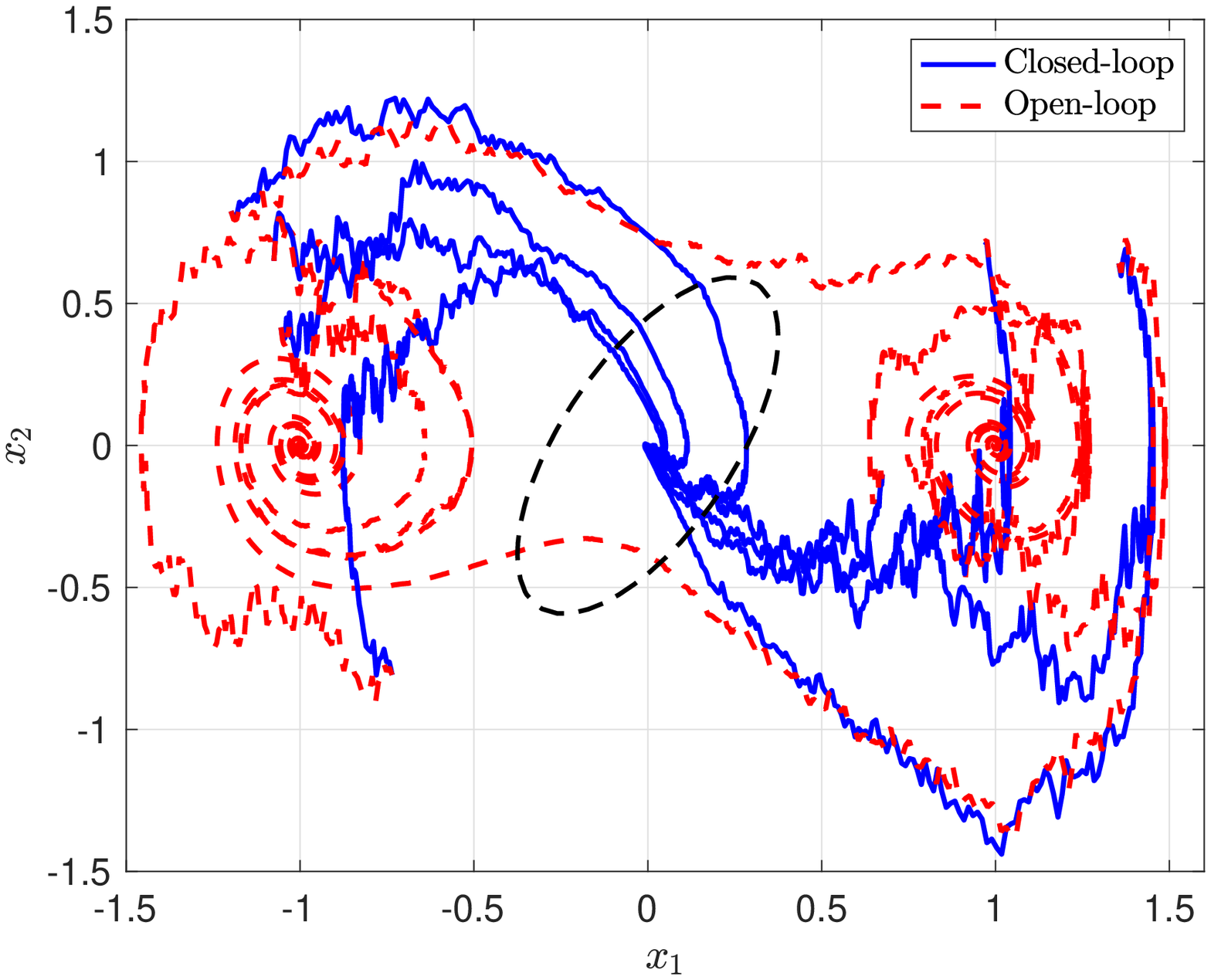}
\caption{Data-driven stochastic optimal control of Duffing oscillator: $x_{1\sim 2}$ vs $time$; Trajectories in 2-D space.}
\label{fig:duffing}
\end{figure}


\subsubsection{Controlled Duffing oscillator}
Our second example is about controlling a duffing oscillator.
\begin{eqnarray}\label{duffing_sys}
\dot{x}_1 = x_2,\;\;\;\;
\dot{x}_2 = x_1 - x_1^3 - 0.5x_2+\sigma x_1\xi +u
\end{eqnarray}
where  $\sigma=0.5$. In this example, we used 225 Gaussian RBF with $\bar{\sigma} = 0.14$ within the range of $D= [-2,\;2]\times [-2,\;2] $ and linear control  parameter $\gamma = 1000$. For the stochastic P-F operator approximation, we applied the NSDMD algorithm using one-step time-series data with $2$e$^5$ initial conditions and $1$e$^3$ realizations with a sample time of $\Delta t= 0.01$.  The region where the blending controller is active is marked by a black doted ellipsoid around the origin in Fig. \ref{fig:duffing}. Simulation results show that the optimal control successfully stabilizes the eight randomly chosen initial conditions to the origin.

\begin{figure}[htbp]
\centering
\includegraphics[width=3.0in]{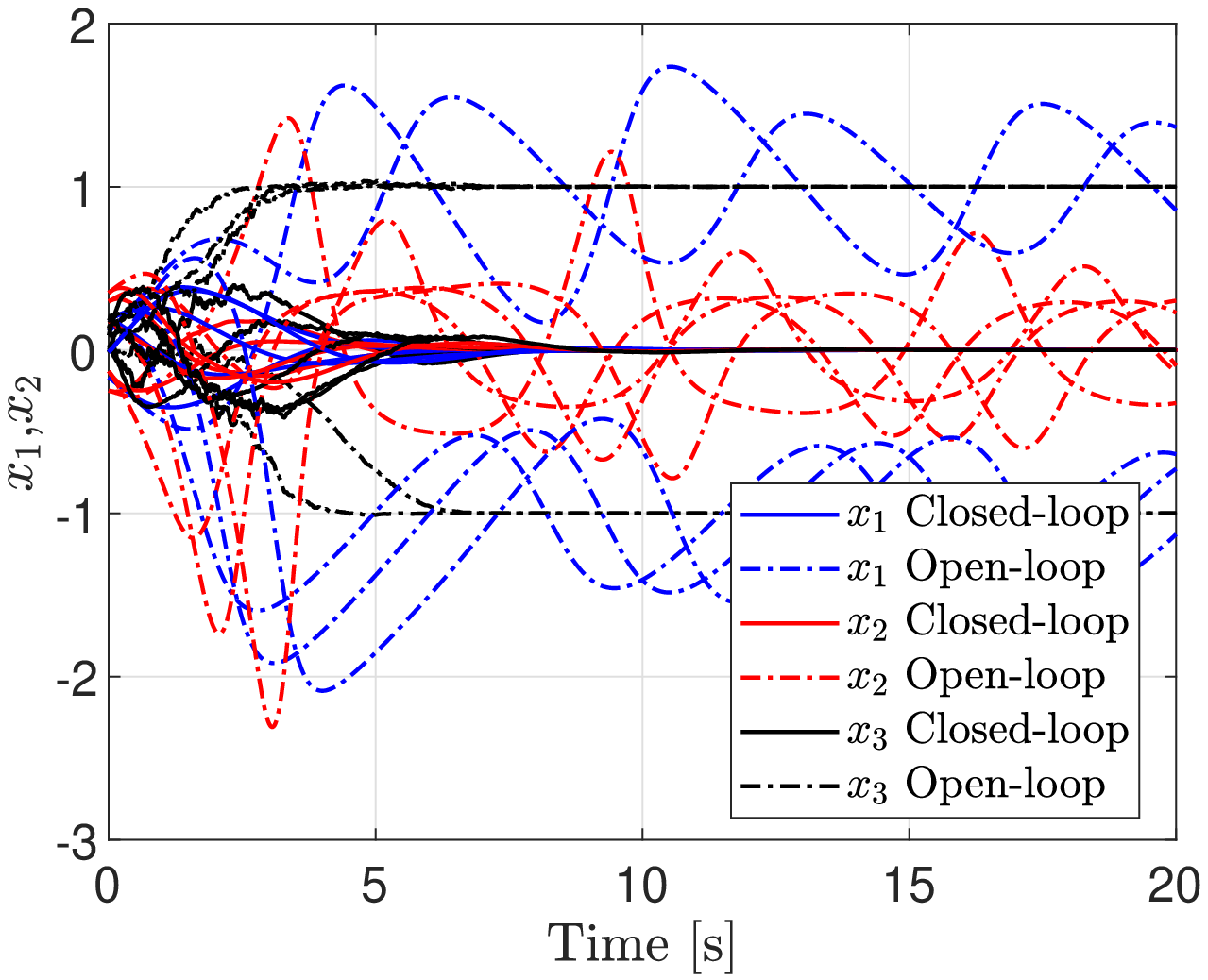}
\hspace{0.1cm}
\includegraphics[width=3.0in]{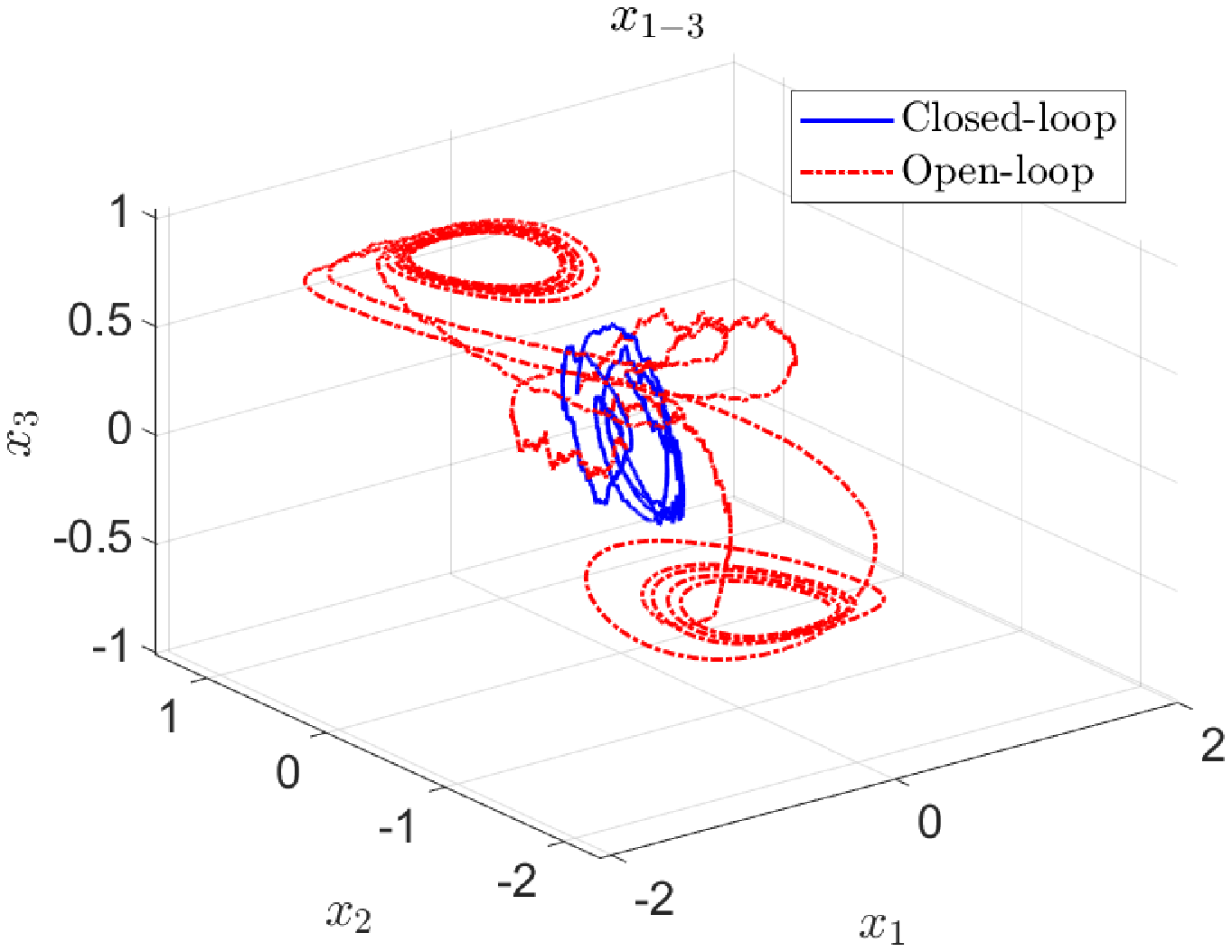}
\caption{Data-driven stochastic  optimal control of 3-D Van der Pol Oscillator: $x_{1\sim 3}$ vs $t$; Trajectories in 3-D space.}
\label{fig:3dVanDerPol}
\end{figure}

\subsubsection{Controlled 3D Van Der Pol Oscillator}
The third example we consider is of 3-D duffing oscillators. 
\begin{equation}
\begin{aligned}
    \dot{x}_1 &= x_2\\
    \dot{x}_2 &=  - x_1 + x_2 -x_3 - x_1^2x_2\\
    \dot{x}_3 &=  x_3 - x_3^2 + \sigma x_1 \xi + 0.5u 
\end{aligned}
\end{equation}

For this example, the stochastic noise term has a $\sigma=0.5$,  we used 512 Gaussian RBF with $\bar{\sigma} = 0.14$ within the range of $D= [-1,\;1]\times [-1,\;1]\times [-1,\;1] $ and linear control  parameter $\gamma = 100$. For the approximation of stochastic P-F operator, we applied NSDMD algorithm using one-step time-series data with $5$e$^5$ initial conditions, and $1$e$^3$ realizations with a sample time of $\Delta t= 0.01$. In Fig. \ref{fig:3dVanDerPol}, we show the results for the optimal control of 3D oscillator system with five random initial conditions.

\begin{figure}[htbp]
\centering
\includegraphics[width=3.0in]{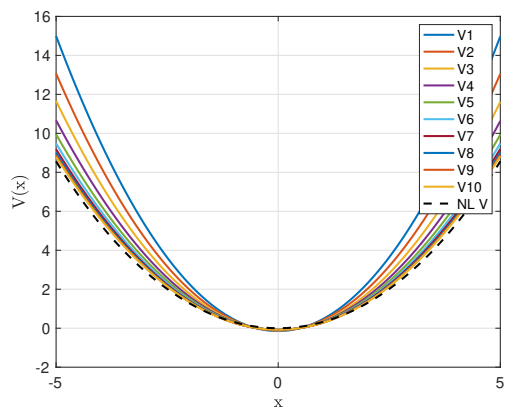}
\caption{Data-driven approximation of value functions using Koopman operator.}
\label{fig:VFKPI}
\end{figure}
%

\begin{figure}[htbp]
\centering
\includegraphics[width=3.0in]{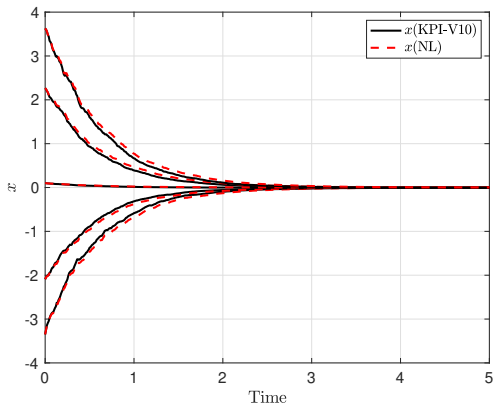}
\caption{Close-loop  trajectories  for  analytical  and  data-driven stochastic optimal control using the last value function (i.e., V10).}
\label{fig:VFKPIwithNL}
\end{figure}
%


\subsection{Stochastic Koopman-based Policy Iteration}

In this subsection, we present the results of optimal control using the Koopman operator.

\subsubsection{Scalar stochastic Example}
Consider  the scalar example again

\vspace{-0.1in}
\begin{eqnarray}\label{scalar_sys}
\dot{x}=ax^3 + \sigma x \xi + u
\end{eqnarray}
where $a = 0.01$, $u\in\mathbb{R}$ is an input, and the stochastic noise term has a $\sigma=0.1$. For each iteration, we approximate the close-loop Koopman generator using a polynomial basis of the first four monomials  $\{ x^n\}_0^3$. We applied EDMD algorithm (refer to Remark \ref{remark_edmd}) with one-step time-series data and $5$e$^2$ initial conditions. $1$e$^2$ different stochastic realizations were used with a sample time of $\Delta t= 0.01$ within the range of $D= [-5,\;5]$. The initial policy $k_0$ is an LQR gain with parameters $Q=1$ and $R=0.1$. Fig. \ref{fig:VFKPI} presents the value function approximations for the first ten iterations. In Fig. \ref{fig:VFKPI}, we show the plot of the cost value as the function of iteration. We notice that the cost function is decreasing the increase in the policy iteration. The dotted line denotes the analytically derived optimal value function and is given by


\begin{eqnarray*}
V^\star_{det}=&\frac{1}{2}a R x^4-Rx^2\sqrt{a^2 x^4+R^{-1}}(a^2 R x^4 + 1)\\
&\left( \frac{1}{2(a^2Rx^4+1)}+ \frac{\sinh^{-1}(a\sqrt{R}x^2)}{2 a \sqrt{R} x^2(a^2 R x^4 +1)^{3/2}} \right)
\end{eqnarray*}
We notice that the cost function for the stochastic system is approaching $V^\star_{det}$.



\begin{figure}[htbp]
\centering
\includegraphics[width=3.0in]{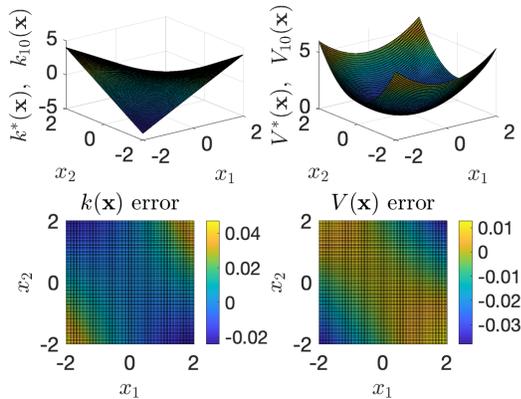}
\caption{Comparison between optimal value function and optimal control using analytical derivation and data-driven optimal control Koopman-based policy iteration.}
\label{fig:comparVandU}
\end{figure}
\begin{figure}[htbp]
\centering
\includegraphics[width=3.0in]{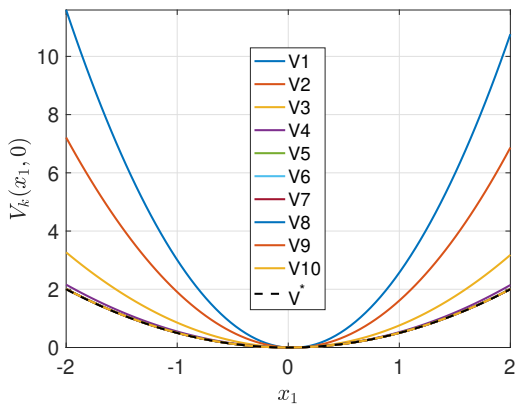}
\caption{Data-Driven value function with fixed $x_2=0$.}
\label{fig:sliceVF}
\end{figure}
\begin{figure}[htbp]
\centering
\includegraphics[width=3.0in]{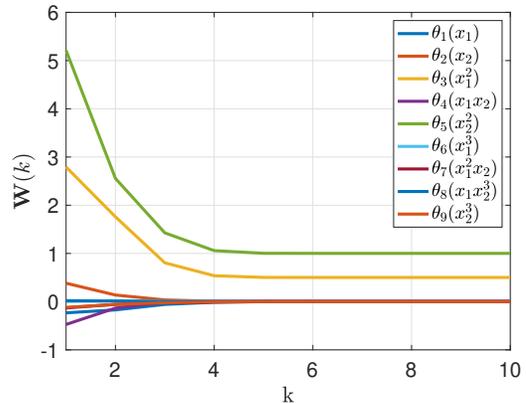}
\caption{Parameters convergence of optimal value function.}
\label{fig:paramatersVF}
\end{figure}
\begin{figure}[htbp]
\centering
\includegraphics[width=3.0in]{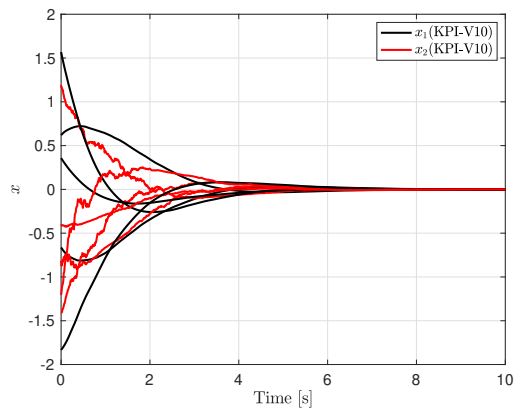}
\caption{Close-loop  trajectories  for   data-driven Koopman-based policy iteration  using  the  last  value  function  (i.e.,V10).}
\label{fig:trajVF}
\end{figure}

\subsubsection{Stable Stochastic System}
Consider the following two-dimensional system, where the uncontrolled system has a stable equilibrium point.
\begin{align}\label{stabel_sys}
&\dot{x}_1 = -x_1 + x_2,\nonumber\\
&\dot{x}_2 = -0.5(x_1 + x_2) + 0.5 x_1^2 x_2+\sigma x_1\xi +u
\end{align}
where $\sigma =0.1$. For this example the analytical form of optimal control can be written explicitly for the case $\sigma=0$ and is of the form
\[u^\star=-x_1x_2,\;\;\;V^\star(\bx)=0.5 x_1^2+x_2^2.\]
For each step of policy iteration, we approximate the Koopman generator for the closed-loop system using a monomial basis of order 1 to 4 $\{ \bx^n\}_1^4$ (i.e., nine elements). We applied EDMD algorithm (refer to Remark \ref{remark_edmd}) with one-step time-series data and $3$e$^4$ initial conditions. $1$e$^2$ different stochastic realizations were used with a sample time of $\Delta t= 0.01$ in the training domain $D= [-2,\;2]\times [-2,\;2]$. Since the system has a stable equilibrium point, we choose the initial control $k_0=0$. For illustration purposes, we present the following comparison with a deterministic case for a small value of $\sigma$. The left plots in Fig. \ref{fig:comparVandU} present  data-driven KPI and the optimal control derived analytically for the deterministic case in \cite{luo2014data}. At the same time, the right plots show the value function comparison with its respective error. Both data-driven optimal control and value function are approximated with the last iteration (i.e., $V_{10}(\bx)$ and $k_{10}(\bx)$). In Fig. \ref{fig:sliceVF}, we show the plot of the cost value as the function of iteration with fixed $x_2=0$ . We notice that the cost function is decreasing with the increase in the policy iteration. The dotted line denotes the analytically derived optimal value function. The parameters of value function evolving in each iteration are shown in Fig.  \ref{fig:paramatersVF}. Note that the convergence of parameters goes to optimal values, which are $\theta_3=0.5$, $\theta_5=1$, and the equal to zero. Finally, the close-loop trajectories for ten random initial conditions using data-driven  Koopman-based policy iteration with $V_{10}$ are presented in Fig. \ref{fig:trajVF}.

\section{Conclusion}\label{section_conclusion}
The duality in the SOCP is discovered through the lenses of linear transfer operator theory involving Koopman and P-F operators. Our first main result provides a convex formulation of the SOCP using the P-F-based lifting of nonlinear system dynamics. Our second main result establishes a connection between the Koopman operator and the HJB equation. This connection allows us to develop a new algorithm for the data-driven approximation of stochastic optimal control, called Koopman-based policy iteration (KPI). 
A computation framework based on the finite-dimensional approximation of the P-F and Koopman operators is developed to compute stochastic optimal control. Simulation results are presented to demonstrate the application of the developed theoretical and computational framework. 


\section{Appendix}\label{section_appendix}

\begin{proof}
For any set $B\in{\cal B}(\bS)$, we have
\[ \bX_t^\bx\in B,\;\;{\rm iff} \;\;\bx\in B_t:=\{\bx\in \mR^n: \bX_t^\bx\in B\} \]
and hence
$\lambda_{B}(\bX_t^\bx)= \lambda_{B_t}(\bx)$. We have
\begin{eqnarray*}{\rm Prob}(\bX_t^\bx\in B)=\mE_\bx[\chi_{B_t}(\bx)]=\mE_\bx[\chi_B(\bX_t^\bx)]=[\mU_t^c\chi_B](\bx).\end{eqnarray*}

Since the origin is assumed to be a.e. a.s. stable w.r.t. measure $\mu$, we have for $\mu$ almost all initial condition $\bx$  \begin{eqnarray}
 {\rm Prob}\{\lim_{t\to \infty} \bX_t^\bx= 0\}=1\implies \nonumber\\{\rm Prob}\{\lim_{t\to \infty}\bX_t^\bx\in B\}=0, \nonumber
\end{eqnarray}
for all set $B\in {\cal B} (\bS)$. Furthermore, since $B\in {\cal B} (\bS)$ and the origin is in the interior of $\bS^c$, we have using dominated convergence theorem  as $\lambda_{B_t}(\bx)\leq 1$ for all $t$,
\begin{eqnarray}
0={\rm Prob}\{\lim_{t\to \infty} \bX_t^\bx\in B\}=\mE_\bx[\lim_{t\to \infty} \chi_{B_t}(\bx)]\nonumber\\=\lim_{t\to \infty}\mE_\bx[\chi_B(\bX_t^\bx)]=\lim_{t\to \infty} [\mU_t^c \chi_B](\bx). \nonumber
\end{eqnarray}
for almost all initial condition $\bx$ w.r.t measure $\mu$. Hence using the fact that the $\mu$ has density $h$, we obtain

 \begin{eqnarray*}0=\int_{\mR^n}\lim_{t\to \infty} [\mathbb{U}_t^c\chi_B](\bx)h(\bx)d\bx\nonumber\\=\int_{\mR^n}\chi_B(\bx)\lim_{t\to \infty}[\mathbb{P}_t^c h](\bx)d\bx.
 \end{eqnarray*}
 where we have used the duality between the Koopman and P-F semi-groups (\ref{duality_semigroup}).  
  The above is true for arbitrary set $B\in{\cal B}( \bS)$, hence we have $\lim_{t\to \infty}[\mathbb{P}_t^c h](\bx)=0 $.
  For any given $\epsilon>0$ and $T\in \mathbb{Z}^+$, let
  \[R_T=\{\bx\in \bX : {\rm Prob}(\bX_t^\bx\in \bS)\geq \epsilon\;{\rm for\;some}\;t>T\}\] and
  \[R=\bigcap\limits_{T=1}^\infty R_T\]
  So set $R$ consists of points with probability larger than $\epsilon$ to end up in set $\bS$. From the construction of set $R$, we have
  
  \[\bx\in R \iff {\rm Prob}\{\bX_t^\bx\in R\}=1,\;\;\;\;\forall t\geq 0.\]
  In other words, $\lambda_R(\bx)=1$ if and only if ${\rm Prob}\{\bX_t^\bx\in R\}=1$. To prove the results it is sufficient to show that $\mu_0(R)=0$. This is true because we know that the origin is locally almost sure stable (Remark \ref{remark_localstability}). For all $t\geq 0$, we have 
 \[\mu_0(R)=\int_{\mR^n}\lambda_R(\bx)d\mu_0(\bx)=\int_{\mR^n} {\rm Prob}\{\bX_t^\bx\in R\}d\mu_0(\bx)\] 
 Now,
 \[\int_{\mR^n} {\rm Prob}\{\bX_t^\bx\in R\}d\mu_0(\bx)=\int_{\mR^n}[\mU_t \chi_D](\bx)h_0(\bx)d\bx\]
 and using duality we have 
 \[\mu_0(R)=\int_{\mR^n}[\mU_t \chi_R](\bx)h_0(\bx)d\bx=\int_{\mR^n}[\mP_t h_0](\bx)d\bx\]
 Since the above is true for all $t\geq 0$, using dominated convergence theorem, we have
 \[\mu_0(R)=\lim_{t\to\infty}\int_{\mR^n}[\mP_t h_0](\bx)d\bx=\int_{\mR^n}\lim_{t\to\infty}[\mP_t h_0](\bx)d\bx=0.\]
 
\end{proof}

\qed

\bibliographystyle{IEEEtran}
\bibliography{root.bib}
\end{document}